\renewcommand*{\tilde}[1]{\widetilde{#1}}
\renewcommand*{\bar}[1]{\overline{#1}}
\newcommand*{\Exp}[1]{\mathbb{E}\left[ #1 \right]}
\newcommand{\inv}[1]{\frac{1}{#1}}
\newcommand{\ind}[1]{\textbf{1}_{\{#1\}}}
\newcommand{\rhs}{right hand side }
\newcommand{\hol}{Hölder }
\newcommand{\grad}{\triangledown }
\newcommand{\heat}{(\partial_t-\Delta)}
\renewcommand{\href}[1]{(\hyperref[#1]{\eqref*{#1}})}
\renewcommand{\leq}{\leqslant}
\newcommand{\R}{\mathbb{R}}
\newcommand{\E}{\mathbb{E}}
\newcommand{\eps}{\epsilon}
\definecolor{darkred}{rgb}{0.9,0.1,0.1}
\definecolor{darkgreen}{rgb}{0,0.5,0}
\numberwithin{equation}{section}
\newtheorem{thm}{Theorem}[section]
\newtheorem{cor}[thm]{Corollary}
\newtheorem{lem}[thm]{Lemma}
\newtheorem{prop}[thm]{Proposition}
\newtheorem{ass}[thm]{Assumption}
\theoremstyle{remark}
\newtheorem{rmq}[thm]{Remark}
\author{Augustin Moinat \and Hendrik Weber}
\address[Augustin Moinat]{University of Warwick, Coventry, United Kingdom}
\email{a.moinat@warwick.ac.uk}
\address[Hendrik Weber]{University of Warwick, Coventry, United Kingdom}
\email{hendrik.weber@warwick.ac.uk}
\thanks{HW is supported by the Royal Society through the University Research Fellowship UF140187.}
\title{Local bounds for stochastic reaction diffusion equations}
\begin{document}

\begin{abstract}
 We prove a priori bounds for solutions of stochastic reaction diffusion equations with super-linear damping in the reaction term. 
 These bounds provide a control on the supremum of solutions on any compact space-time set which only depends on the specific 
 realisation of the noise on a slightly larger set and which holds uniformly over all possible space-time boundary values. This constitutes
 a space-time version of the so-called ''coming down from infinity" property. 

Bounds of this type are very useful to control the large scale behaviour of solutions effectively and can be used, for example, to construct 
solutions on the full space even if the driving noise term has no  decay at infinity.  

Our method shows the interplay of the large scale behaviour, dictated by the non-linearity, and the small scale oscillations, dictated by the rough driving noise. 
As a by-product we show that there is a close relation between the regularity of the driving noise term and the integrability of solutions. 
\end{abstract}
\maketitle
\section{Introduction}

We are interested in  reaction diffusion equations of the type
\begin{equation}\label{phi41}
(\partial_t-\Delta)u=-f(u)+\zeta,
\end{equation}
over $\R_t \times \R^d_x$ where $\zeta$ is an irregular distribution. 
The example we have in mind is the case where $\zeta$ is a random noise term, such as space-time 
white noise for $d=1$, or a noise which is ``white in time and coloured in space'' for $d\geq 2$.
However, we mention right away that our main result is purely deterministic and the only information 
about $\zeta$ that enters is its regularity measured in a suitable space of distributions. The non-linearity $f$ is assumed to be continuous, with super-linear growth at infinity in $u$. We derive a priori bounds on $u$. 

\medskip

It is well-known that if $f$ satisfies the so-called Osgood condition, that is if $f$ satisfies $\int_1^\infty \frac{1}{f(u)} du <\infty$, then 
solutions of the ODE $\dot{x} = - f(x)$ ``come down from infinity in finite time'' (see \cite{OS}). This means 
 that if $x$ solves the equation over $[0,t]$, then automatically  $x(t)$ satisfies a bound
which depends on $t$, but holds uniformly over all possible choices of initial datum $x(0)>0$.  
Similar statements can be derived for reaction 
diffusion equations based on a comparison principle (see e.g. \cite[Chapter 14]{Smoller} ) and also stochastic 
reaction diffusion equations (see e.g. \cite[Theorem 6.2.3]{cerrai2001second} and \cite{C}  ). 
These bounds are powerful tools to study the long-time behaviour of solutions, both in the deterministic
and in the stochastic setting - see e.g. \cite{TW} for a construction of invariant measures for 
stochastic PDEs based on such bounds.

\medskip

Our main result, Theorem~\ref{The Theorem}, is a space-time version of such a bound for 
solutions of~\eqref{phi41} with $f(u)=u|u|^{m-1}+g(u)$ where $g$ is bounded. We consider a continuous functions $u \colon \R \times \R^d \to \R$ and we assume 
that \eqref{phi41} holds for $(t,x)$ in  a cylinder\footnote{Of course the 
equation \eqref{phi41} has to be interpreted in a distributional sense, so this condition means that it holds when tested against
smooth test-functions which are supported in the cylinder, see \eqref{phi41bis}.}, say 
\begin{equation*}
P_0 :=  (0,1) \times (-1,1)^d.
\end{equation*}
Then for $R< \frac12$ 
the $L^{\infty}$ norm $\|u\|_{P_R}$ of $u$ on a smaller cylinder 
\begin{equation}\label{defpr}
P_R :=   (R^2,1) \times (-(1-R), 1-R)^d,
\end{equation} 
 satisfies a bound which only depends
on $R$ and a distributional norm of $\zeta$ restricted to the original cylinder $P_0$:
\begin{align}\label{pre-main}
\|u \|_{P_R}\leqslant C( \alpha,d,m) \max\Big\{  R^{-\frac{2}{m-1}}, [\zeta]_{\alpha-2,P_0}^{\frac{2}{2+(m-1)\alpha}} ,\|g\|^\frac1m\Big\},
\end{align}
where $[\zeta]_{\alpha-2,P_0}$  the space-time H\"older norm of order $\alpha-2$ on $P_0$ (see \eqref{shauder ou} below for a precise definition),
and $\|g\|$ refers to the supremum norm of $g$.

\medskip

One possible application of the bound \eqref{pre-main}  is the \emph{construction} of 
solutions to  \eqref{phi41} on the full space.  The standard approach to solve stochastic reaction 
diffusion equations \cite{walsh1986introduction,hairer2009introduction,da2014stochastic} consists of writing the equation in its mild form and 
solving the corresponding fixed point problem using Picard iterations. However, this approach requires a pathwise uniform-in-$x$ control on 
 $\zeta$, which typically only holds on bounded domains or  if  $\zeta$ decays at $\infty$; the interesting case of spatially stationary noise cannot 
 be treated directly in this way.  This problem was overcome in \cite{I} where solutions
 where first constructed on a sequence of growing tori and then a compactness argument in a space with weights was used to pass to the limit.
 The strong localisation obtained in \eqref{pre-main} should allow to significantly simplify this construction.

\medskip

The estimate \eqref{pre-main} also has an interesting consequence for the integrability of $u$. In fact, we are mostly interested in the case
where $\zeta$ is a random distribution with Gaussian tails such that $\E\big[ e^{ \eps [\zeta]_{\alpha-2,P_0}^2} \big]$ is finite for $\eps>0$ 
 small enough. The estimate
\eqref{pre-main} then immediately implies that for any $R>0$ and for $\eps>0$ small enough we get $\E\big[ e^{ \eps \|u \|_{P_R}^{2+(m-1)\alpha}} \big]<\infty$.
  So $\|u \|_{P_R}$ has lighter tails than Gaussian.
  We observe that better pathwise \emph{regularity} for $\zeta$ leads to better \emph{integrability} with respect to the probability distribution for $u$.
In the special case of one-dimensional reaction-diffusion equations where $\zeta$ is a space-time white noise, equation \eqref{phi41} equipped with suitable boundary 
conditions defines a reversible Markov process, and an explicit expression of the equilibrium measure is available. In Section~\ref{sec:Optimality} we argue that in this case
the integrability we derive from estimate~\eqref{pre-main} coincides with the integrability derived from the explicit invariant measure.

\medskip

Finally, our method offers a new perspective on singular SPDE. Our starting point is Hairer's notion \cite{hairer2014theory} of \emph{subcriticality} which in the context of \eqref{phi41} states, roughly 
speaking, that the \emph{small scale} behaviour of solutions should be determined by the interplay of the heat operator and the rough driving noise $\zeta$, while on \emph{large scales} the non-linearity
becomes dominant. We implement this philosophy by regularising \eqref{phi41} on a  scale $T$ by convolving the equation with a suitable regularising kernel, arriving at 
\begin{equation}\label{phi41T}
(\partial_t-\Delta)u_T=-u_T|u_T|^{m-1}+g(u)_T+\zeta_T  + [u_T|u_T|^{m-1} - (u|u|^{m-1})_T]  ,
\end{equation}
where the subscript $T$ denotes a regularised quantity. The extra term $[u_T|u_T|^{m-1} - (u|u|^{m-1})_T]  $ on the right hand side appears because regularisation and application of the polynomial do not commute. We then use a low regularity version of classical Schauder theory, Lemma~\ref{lemshauder},
to control the error term $[u_T|u_T|^{m-1} - (u|u|^{m-1})_T]  $. Using this bound, the remaining terms can be treated as in the smooth case (see Theorem~\ref{main thm poly}).

\medskip

The theory of regularity structures is indeed a main motivation for this work. A priori including the "coming-down from infinitiy" property have been proven for singular 
SPDEs, namely the dynamic $\phi^{2m}_2$ \cite{mourrat2017global,TW}  and $\phi^4_3$ models \cite{MW,albeverio2017invariant,GH}
both on  compact domains and  on the full space.  The works on $\phi^4_3$ all  relied on Fourier methods,
the method of paracontrolled distributions, rather than the theory of regularity structures. The bounds obtained there imply coming down from infinity in time only, in the case of $\phi^4_3$ on the full space \cite{GH} in a weighted space.
The ideas presented here can be extended to these more singular equations when the 
low regularity Schauder estimate,  Lemma~\ref{lemshauder}, is  replaced by a suitable version of the Schauder estimates from the theory of regularity structures.
This is the content of our companion paper \cite{MW3D}. There we show that our method significantly simplifes the technical arguments used in  \cite{MW,albeverio2017invariant,GH} 
and extend its scope to construct solutions on the full space without the need for weights. 

\medskip

In the more regular case presented here it would be natural to aim  to also include  more general non-linearities,
such as functions with faster than polynomial growth (e.g. the $\exp(\phi)$ model $f(u)=\sinh(u)$, see \cite{AKR}) or functions of slower than polynomial growth such as $f(u)\sim u \log(u)^\delta$ for $\delta>2$. 
In this case the commutator term arising in \eqref{phi41T} turns into
\[f(u_T)-f(u)_T.\]
Unfortunately, our method crucially on the fact that $xf'(x)\lesssim f(x)$ which holds for polynomial $f$, but not
for functions with exponential growth. 
Also, another part of our argument excludes functions that grow to slowly
(in the proof of Theorem~\ref{The Theorem} we need to sum $\Theta(u):= \frac{f(u)}{u}$  for $u=2^{-k}$, $k \in \mathbb{N}$),
thus essentially restricting us to polynomial $f$.
However, in the case of a more general non-linearity $f$, we implement a more standard argument based on subtracting the solution $w$ to the linear equation
\[
(\partial_t - \Delta) w  =\zeta,
\]  
and we do not pass through the regularised equation \eqref{phi41T}.
  We then get the property of ``coming down from infinity'' for the  remainder $u-w$ in Corollary~\ref{cor:dpd}. For example, when $f(u)=\sinh(u)$, the strong damping implies that 
  $u$ ``comes down from infinity" much more quickly than in the polynomial case -- in this case the function $ R^{-\frac{2}{m-1}}$ in \eqref{pre-main} turns into  $\Theta^{-1}(R^{-2})$, 
  where $\Theta(R) =\frac{\sinh(R)}{R}$. For very weak damping $f(u) \sim u\log(u)^{\alpha+2}$, we obtain a slow coming down from infinity, of order $\exp(R^{-\frac{2}{\alpha}})$.  In fact, this method is even 
 easier than the method we use for the polynomial case, but it has two significant disadvantages: On the one hand, it is impossible to measure the fine interplay between regularity of $\zeta$ and integrability of $u$ in this way, 
 because the remainder $u-w$ can never have better integrability than the Gaussian process $w$. More importantly, 
 the more sophisticated method we use in the proof of our main theorem is crucial when dealing with more singular equations \cite{MW3D}.

%
%

\medskip

The rest of the paper is structured as follows: In Section~\ref{sec:ODE}  we discuss the elementary case of 
the stochastic ODE 
\begin{equation*}
dx(t)= -|x(t)|^{m-1}x(t) dt + dw(t),
\end{equation*}
in which our strategy and also the interplay between the regularity of the noise  and integrability of the solution becomes apparent
in a technically simple context.
In Section~\ref{sec:Main} we introduce the framework and state the main result. The proof is split into Sections~\ref{sec:Caccio}--\ref{sec:proof}:
In Section~\ref{sec:Caccio} we present a proof of the ``space-time coming down from infinity'' in the case where $\zeta$
is replaced by a smooth function. The argument relies on a maximum principle.
As a Corollary, as discussed above, we derive the bounds on the remainder $u-w$
in the case of general, not necessarily polynomial $f$.
%
In Section~\ref{sec:proof} the result of Section~\ref{sec:Caccio} is applied to the regularised equation~\ref{phi41T} and combined with Schauder estimates to 
bound the commutator concluding the proof of our main result.
 In Section~\ref{sec:multipliative} we discuss the case of a random distribution $\zeta$  given by the time-derivative of the stochastic integral $\int_0^t \sigma dW$ 
 for an adapted bounded process $\sigma=\sigma(s,x)$ and a distribution valued Wiener process $W$ with suitable (spatial) covariance operator. We show
Gaussian estimates for $[\zeta]_{\alpha-2}$ and thus better than Gaussian bounds for $u$.
Finally, in the special case of space-time white noise in one spatial 
dimension we show that the integrability obtained from our method coincides with the integrability of the process
 in equilibrium obtained from the explicit invariant measure.


\section{The ODE case}\label{sec:ODE}
Before dealing with equation~\eqref{phi41} we briefly discuss the case of a (stochastic) ordinary 
differential equation
\begin{equation}\label{e:2-1}
dx(t)= - |x(t)|^{m-1} x(t) dt + dw(t)
\end{equation}
for a standard Brownian motion $w(t)$ and for $m>1$. 
It is well known that \eqref{e:2-1} defines a reversible Markov process with respect to the measure
\begin{equation}\label{e:2-2}
\mu(dx) \propto \exp\Big(- \frac{2}{m+1}|x|^{m+1}\Big) dx.
\end{equation}
We seek to derive optimal bounds on solutions of $x(t)$ directly from the equation~\eqref{e:2-1}.

\medskip

As a starting point, consider the case of an ordinary differential equation driven by a regular noise term $\eta$
\begin{equation}\label{sde}
\dot{x}(t)=-x(t)|x(t)|^{m-1}+\eta(t).
\end{equation}
A simple ODE comparison Lemma, see \cite[Lemma 3.8]{TW}, shows that for $t \in (0,1]$ 
\[
|x(t)| \leqslant C(m) \max\Big{\{}t^{-\inv{m-1}},\Big{(}\sup_{t\in[0,1]}|\eta(t)|\Big{)}^\inv{m}\Big{\}},
\]
uniformly over all choices of initial datum $x(0)$. If $\eta$ is a Gaussian process, such that the random variable $\sup_{t\in[0,1]} |\eta(t)|$ 
has finite Gaussian moments, this bound implies that for $\eps>0$ small enough
\begin{equation}\label{e:2-3a}
\Exp{\exp(\eps  |x(1)|^{2m})}<\infty.
\end{equation}
In particular,  in this regular case we get much better integrability than under the measure \eqref{e:2-2}. The following deterministic lemma shows 
that the difference in integrability  is closely related to the  regularity of the driving signal.
\begin{lem}\label{lem:ODE}
Let $w\colon [0,1] \to \R$ be $\alpha$-H\"older continuous for some $\alpha \in (0,1)$ with $w(0)=0$. For some  $m>1$ let $x\colon [0,1] \to \R$ be a continuous
solution to
\begin{equation}\label{e:2-3}
x(t) = x(0) - \int_0^t  |x(s)|^{m-1} x(s) ds + w(t) .
\end{equation}
Then for $t \in (0,1]$
\begin{equation}\label{ODE theorem}
|x(t)|\lesssim\max\Big{\{}t^{-\inv{m-1}},[w]_\alpha^{\frac{1}{1+(m-1)\alpha}}\Big{\}},
\end{equation}
where $[w]_\alpha = \sup_{0\leq s <t\leq 1} \frac{|w(t) -w(s)|}{|t-s|^\alpha}$ denotes the $\alpha$-H\"older semi-norm.
Here and in the proof we use the symbol $\lesssim$ for $\leq C(\alpha,m)$.
\end{lem}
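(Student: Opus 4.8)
The strategy is a backward-in-time maximum-principle argument at a fixed time, the elementary precursor of the parabolic argument used later for \eqref{phi41}. Since $f(x)=x|x|^{m-1}$ is odd and the transformation $x\mapsto -x$, $w\mapsto -w$ preserves both $w(0)=0$ and $[w]_\alpha$, it suffices to fix $t_0\in(0,1]$, assume $x(t_0)=:M>0$, and prove $M\lesssim\max\{t_0^{-\inv{m-1}},L\}$ with $L:=[w]_\alpha^{1/(1+(m-1)\alpha)}$. The basic object is $v:=x-w$: the integral equation \eqref{e:2-3} shows that $v(s)=x(0)-\int_0^s|x(r)|^{m-1}x(r)\,dr$ is $C^1$ with $\dot v(s)=-|x(s)|^{m-1}x(s)$, even though $w$ itself is only H\"older. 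Introduce the last time the solution was at half its terminal height,
\[
\sigma:=\sup\Big(\{0\}\cup\{s\in[0,t_0]:x(s)\le M/2\}\Big),
\]
so that $x\ge M/2>0$ on $[\sigma,t_0]$, $v$ is strictly decreasing there with $\dot v=-x^m\le-(M/2)^m$, and either $x(\sigma)=M/2$ or $\sigma=0$.

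\emph{Case $\sigma>0$.} Integrating $\dot v\le-(M/2)^m$ over $[\sigma,t_0]$ and using $v(\sigma)-v(t_0)=x(\sigma)-x(t_0)+w(t_0)-w(\sigma)=-M/2+(w(t_0)-w(\sigma))$ together with $|w(t_0)-w(\sigma)|\le[w]_\alpha\delta^\alpha$, $\delta:=t_0-\sigma>0$, gives
\[
\Big(\tfrac M2\Big)^m\delta+\tfrac M2\le[w]_\alpha\,\delta^\alpha
\]
(in particular $[w]_\alpha>0$ here). The two consequences $(M/2)^m\delta\le[w]_\alpha\delta^\alpha$ and $M/2\le[w]_\alpha\delta^\alpha$ sandwich $\delta$ between $(M/(2[w]_\alpha))^{1/\alpha}$ and $([w]_\alpha(M/2)^{-m})^{1/(1-\alpha)}$; since $\alpha\in(0,1)$, eliminating $\delta$ (raise both bounds to the power $\alpha(1-\alpha)$) yields $(M/2)^{1+(m-1)\alpha}\le[w]_\alpha$, i.e.\ $M\le 2L$.

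\emph{Case $\sigma=0$.} Now $x\ge M/2>0$ on all of $[0,t_0]$, but $x(0)$ may be arbitrarily large, so the bound must come from the damping via a scalar comparison $\dot\phi\lesssim-\phi^m\Rightarrow\phi(t_0)\lesssim t_0^{-\inv{m-1}}$, which needs $w$ to be small relative to $M$ and hence can only be run on a short enough window. Set $\ell:=\min\{t_0,(M/(8[w]_\alpha))^{1/\alpha}\}$ (read as $\ell=t_0$ if $[w]_\alpha=0$), so that $w$ oscillates by at most $M/8$ on $J:=[t_0-\ell,t_0]$, and work with $\phi(s):=x(s)-\big(w(s)-w(t_0-\ell)\big)=x(t_0-\ell)-\int_{t_0-\ell}^s|x|^{m-1}x\,dr$, which is $C^1$, decreasing on $J$, stays within $M/8$ of $x$, and satisfies $\phi(t_0)\ge M-M/8$. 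Hence $\phi\ge 7M/8$ on $J$, so $x\ge\tfrac23\phi$ there and $\dot\phi=-x^m\le-(2/3)^m\phi^m$; integrating $\frac{d}{ds}\phi^{1-m}\ge(m-1)(2/3)^m$ over $J$ and discarding $\phi(t_0-\ell)^{1-m}\ge0$ gives $\phi(t_0)\le\big((m-1)(2/3)^m\ell\big)^{-\inv{m-1}}$, whence $M\le\tfrac87\phi(t_0)\lesssim\ell^{-\inv{m-1}}$. Finally $\ell^{-\inv{m-1}}=\max\{t_0^{-\inv{m-1}},(8[w]_\alpha/M)^{1/(\alpha(m-1))}\}$: in the first case we are done, and in the second $M\lesssim([w]_\alpha/M)^{1/(\alpha(m-1))}$ rearranges (multiply by $M^{1/(\alpha(m-1))}$ and raise to the power $\alpha(m-1)$) to $M^{1+(m-1)\alpha}\lesssim[w]_\alpha$, i.e.\ $M\lesssim L$. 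Combining the two cases proves \eqref{ODE theorem}.

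\emph{Main obstacle.} Case $\sigma>0$ is a two-line maximum-principle computation; the real content is Case $\sigma=0$, where one must extract the $t_0^{-\inv{m-1}}$ decay from the nonlinearity while the driving term is only H\"older rather than of bounded variation. The point is that on a window of length $\ell$ the oscillation of $w$ is at most $[w]_\alpha\ell^\alpha$, so $w$ acts like a perturbation of size $[w]_\alpha\ell^\alpha$; choosing $\ell$ to balance $t_0$ against the scale on which this oscillation becomes comparable to $M$ is exactly what produces the exponent $\tfrac{1}{1+(m-1)\alpha}$ in \eqref{ODE theorem}, and this balancing is the only genuinely delicate step.
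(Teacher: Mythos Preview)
Your proof is correct, and it is genuinely different from the paper's own argument. The paper proves Lemma~\ref{lem:ODE} by \emph{regularising} the equation: it convolves \eqref{e:2-3} with a smooth kernel $\Psi_T$, applies a smooth ODE comparison lemma to the resulting equation for $x_T$, controls the commutator $[\cdot^m,(\cdot)_T]x$ via a trivial ``Schauder'' bound $[x]_{\alpha,(s-T,s)}\le T^{1-\alpha}\|x\|^m+[w]_\alpha$, and then chooses $T\sim\mu\|x\|_{(0,1)}^{-(m-1)}$. This yields an inequality of the form $\|x\|_{(t_{n+1},1)}\le\max\{\tfrac12\|x\|_{(t_n,1)},C[w]_\alpha^{1/(1+(m-1)\alpha)}\}$, which is then \emph{iterated} along a sequence of times $t_n$ to conclude. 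The whole construction is deliberately engineered as the one-dimensional toy version of the PDE proof in Sections~\ref{sec:Caccio}--\ref{sec:proof}.

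Your argument instead uses a last-exit time and a short-window comparison directly on the integral equation, with no mollification, no commutator, and no iteration. In Case~$\sigma>0$ the two inequalities $(M/2)^m\delta\le[w]_\alpha\delta^\alpha$ and $M/2\le[w]_\alpha\delta^\alpha$ already pin down the exponent; in Case~$\sigma=0$ your window length $\ell=\min\{t_0,(M/(8[w]_\alpha))^{1/\alpha}\}$ plays exactly the role of the paper's mollification scale $T$, and the comparison $\dot\phi\le -c\phi^m$ replaces the smooth ODE lemma. Both proofs pivot on the same scaling balance between the damping time $M^{-(m-1)}$ and the H\"older oscillation $[w]_\alpha\ell^\alpha$, but yours reaches the conclusion in a single step at a fixed $t_0$ and is noticeably shorter. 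What the paper's route buys is the template (regularise $\to$ Schauder for the commutator $\to$ choose the scale $\to$ iterate) that is indispensable in the PDE setting, where a pointwise last-exit argument is unavailable; your approach is specific to the scalar ODE but is the more economical proof of the lemma as stated.
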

If $w$ is a random function for which $[w]_\alpha$ has Gaussian tails this estimate yields
\begin{align*}
\Exp{ \exp\Big(\eps |x(1)|^{2+(m-1)2\alpha} \Big)} <\infty,
\end{align*}
for $\eps$ small enough. In the Brownian case where  $\alpha=\frac12-$ the exponent $2+(m-1)2\alpha$ becomes $1+m -$ in line with \eqref{e:2-2}
and as $\alpha$ approaches $1$, the exponent becomes $2m$ in line with \eqref{e:2-3a}. 

\medskip
\begin{proof}[Proof of Lemma~\ref{lem:ODE}]
As a first step, we regularise equation~\eqref{e:2-3}. To this end we introduce a smooth non-negative kernel $\Psi \colon \R \to \R$ which is compactly supported in $[0,1]$ with $\int\Psi=1$ and set  $\Psi_T(t)=\frac{1}{T}\Psi(\frac{t}{T})$. 
For any function $f\colon (0,1) \to \R$ and for $t \in (T,1)$ we define the regularisation $f_T(t) = f\ast\Psi_T(t)=\int_{t-T}^t\Psi_T(t-s) f(s) ds$.

\medskip
Convolving the integral equation~\eqref{e:2-3} with $\Psi_T$ and taking a time derivative leads to
\begin{equation}\label{sde convolve}
\dot{x}_T(t)=-x_T(t)|x_T(t)|^{m-1}+\dot{w}_T(t)+[\cdot^m,(\cdot)_T] x(t) \qquad \text{for } t \in (T,1),
\end{equation}
where we write  $[\cdot^m,(\cdot)_T] x =\left[x_T|x_T|^{m-1}-(x|x|^{m-1})_T\right]$ for the commutator term on the right hand side. 

\medskip
Now we can apply the ODE comparison Lemma  \cite[Lemma 3.8]{TW}  to get, for all $t \in (T,1]$
\begin{equation}\label{sde 1}
|x_T(t)| \lesssim\max\Big{\{}(t-T)^{-\inv{m-1}},\Big{(}\sup_{[T,1]}|\dot{w}_T|\Big{)}^\inv{m},\Big{(}\sup_{[T,1]}\big| [\cdot^m,(\cdot)_T]x \big| \Big{)}^\inv{m}\Big{\}}.
\end{equation}

\medskip
To replace $x_T$ by $x$ and to bound the commutator term on the right hand side, we need information on the regularity of $x$.
Indeed, using the fact that $\Psi$ has integral $1$, we first see for $t\in(T,1]$,
\begin{align}
|(x_T-x)(t)|=&\Big|\int_{t-T}^t \Psi_T(t-s)(x(s)-x(t))ds \Big| \notag\\
\leqslant& [x]_{\alpha,(t-T,t)}  \int_{t-T}^t \Psi_T(t-s)|s-t|^\alpha ds
\leqslant T^\alpha[x]_{\alpha,(t-T,t)},\label{sde 2.5}
\end{align}
where  $[x]_{\alpha,I}= \sup_{s \neq t \in I} \frac{|x(t) -x(s)|}{|t-s|^\alpha}$ denotes the $\alpha$-H\"older semi-norm of $x$ restricted to 
the interval $I$.
%
%
 Similarly we establish a bound on the commutator: for $s\geqslant T$, 
\begin{equation}\label{sde 3}
|[\cdot^m,(\cdot)_T ]x(s)|\lesssim \|x\|_{(s-T,s)}^{m-1} T^\alpha[x]_{\alpha,(s-T,s)},
\end{equation}
where $\|x\|_{I}$ s the supremum norm of $x$ restricted to the interval $I$. 
To see \eqref{sde 3} we first write
\[|[\cdot^m,(\cdot)_T ]x(s)|=\Big|\int_{s-T}^s\Psi_T(s-r) \big(x_T(s)|x_T(s)|^{m-1}-x(r)|x(r)|^{m-1} \big)dr\Big|.\]
Then, using the mean value theorem and $|x_T(s)|\leqslant\|x\|_{(s-T,s)}$, we have 
\[|x_T(s)|x_T(s)|^{m-1}-x(r)|x(r)|^{m-1}|\leqslant m \|x\|_{(s-T,s)}^{m-1} |x_T(s)-x(r)|.\]
Finally, using the triangle inequality in the form $|x_T(s)-x(r)|\leqslant|x_T(s)-x(s)|+|x(s)-x(r)|\leqslant 2 T^\alpha[x]_{\alpha,(s-T,s)}$,
we arrive at \eqref{sde 3}.

\medskip
In order to control the $[x]_{\alpha,(s-T,s)}$ we refer to the equation \eqref{e:2-3} once more, this time without regularisation.  
In this context a ``Schauder estimate''
is trivially derived, simply by writing for $0<t_1<t_2 <1$
\begin{align*}
|x(t_2)-x(t_1)|=&\Big|\int_{t_1}^{t_2}x(s)|x(s)|^{m-1} ds + w(t_2) - w(t_1)\Big|\\
\leqslant &|t_2-t_1|   \|x \|_{[t_1,t_2]}^m+|w(t_2)-w(t_1)|,
\end{align*}
which can be restated as
\begin{equation}\label{sde 2}
[x]_{\alpha,(s-T,s)}\leqslant T^{1-\alpha}\|x\|_{(s-T,s)}^m+[w]_\alpha.
\end{equation}
 Finally, concerning the noise term on the right hand side of \eqref{sde 1} we write
\begin{equation}\label{sde 4}
\sup_{t\in[T,1]}|\dot{w}_T(t)| = \sup_{t\in[T,1]} \Big|  \int_{t-T}^t \dot{\Psi}_T(t-s) \big( w(s) - w(t)  \big)ds  \Big|    \lesssim T^{\alpha-1}[w]_\alpha.
\end{equation}
Combining \eqref{sde 1}, \eqref{sde 3}, \eqref{sde 2} and \eqref{sde 4} we arrive at
\begin{align*}
|x_T(t) |\lesssim \max\Big{\{}&(t-T)^{-\inv{m-1}}, \Big(  T^{\alpha-1}[w]_\alpha \Big)^{\frac{1}{m}}, 
\Big(  T \|x\|_{(t-T,t)}^{2m-1}\Big)^{\frac{1}{m}} , \\
&   \Big(   T^\alpha \|x\|_{(t-T,t)}^{m-1} [w]_\alpha \Big] \Big)^{\frac1m}  \Big\}, \qquad\qquad t > T.
\end{align*}
Combining this estimate with \eqref{sde 2.5} and \eqref{sde 2}
this estimate turns into
\begin{align*}
|x(t) |\lesssim \max\Big{\{}&(t-T)^{-\inv{m-1}}, \Big(  T^{\alpha-1}[w]_\alpha \Big)^{\frac{1}{m}}, 
\Big(  T \|x\|_{(t-T,t)}^{2m-1}\Big)^{\frac{1}{m}} , \\
&   \Big(   T^\alpha \|x\|_{(t-T,t)}^{m-1} [w]_\alpha \Big] \Big)^{\frac1m}, T\|x\|^m, T^{1-\alpha} [w]_\alpha  \Big\},   \qquad\qquad t > T.
\end{align*}
Now we  choose $T=\frac{\mu}{\|x\|_{(0,1)}^{m-1}}$  for  $\mu = \mu(\alpha,m)>0$ small enough
 and consider $t$ satisfying $(t-T)^{-\frac1{m-1}}\leqslant\frac12\|x\|_{(0,1)}$. Then applying the elementary inequality $xy \leq \delta x^{p}+ C(\delta) y^{p'}$
 for $\delta>0$ and $p,p' \in (0,1)$ with $\frac{1}{p}+\frac{1}{p'} =1$ multiple times  yields
\begin{equation}\label{sde 5}
\|x(s)\|_{(2^{m-1}+\mu)\|x\|_{(0,1)}^{1-m},1)}\leqslant\max\Big{\{}\inv{2}\|x\|_{(0,1)},C[w]_\alpha^{\frac{1}{1+(m-1)\alpha}}\Big{\}},
\end{equation}
for some constant $C =C(\alpha, m)$. 
Note that we can assume that $(2^{m-1}+\mu)\|x\|_{(0,1)}^{1-m} < 1$, because else we 
trivially have a bound on $\|x\|_{(0,1)}$.

\medskip
We now define a finite set $0=t_0< \ldots  <t_N =1$ by setting   $t_{n+1}-t_n=(2^{m-1}+\mu)\|x\|^{1-m}_{(t_n,1)}$ 
as long as the times $t_{n+1}$ defined this way stay strictly less than $1$. We terminate the sequence, once this algorithm would produce 
a $t_{n+1} \geq 1$ in which case we set $t_{n+1} = t_N =1$. Note that $(2^{m-1}+\mu)\|x\|^{1-m}_{(t_n,1)}$  is increasing in $n$ so the sequence necessarily 
terminates after finitely many steps.

\medskip
Applying \eqref{sde 5} to the equation restarted at the times $t_n$ we obtain for $n \leq N-1$
\begin{equation}\label{sde 6}
\|x(s) \|_{(t_{n+1},1)}\leqslant\max\Big{\{}\inv{2}\|x\|_{(t_n,1)},C[w]_\alpha^{\frac{1}{1+(m-1)\alpha}}\Big{\}}.
\end{equation}
We now show that the estimate \eqref{ODE theorem} holds for the points $t_n$ for $n < N$.
When the maximum in \eqref{sde 6} is realised by $C[w]_\alpha^{\frac{1}{1+(m-1)\alpha}}$, then this is obvious.
Else,  we have for $k\leqslant n$, 
  $\|x\|_{(t_n,1)}\leqslant \|x\|_{(t_k,1)}2^{k-n}$ and hence
\begin{align}
\notag
t_n=&\sum_{k=0}^{n-1}t_{k+1}-t_k=(2^{m-1}+\mu)\sum_{k=0}^{n-1}\|x\|^{1-m}_{(t_k,1)}\\
\label{e:ppp}
\leqslant&(2^{m-1}+\mu)\|x\|^{1-m}_{(t_n,1)}\sum_{k=0}^{n-1}2^{(n-k)(1-m)}\lesssim\|x\|^{1-m}_{(t_n,1)}.
\end{align}
For the end point $t_N$ we have  either $t_{N-1} \geqslant \frac12$ or $t_{N}-t_{N-1}\geqslant \frac12$. In the first case 
we invoke \eqref{e:ppp} for $n = N-1$ and in the second case the definition of $t_{n+1}-t_n$, in both cases yielding
the existence of a constant $C$ such that \[ 
\|x\|_{(t_{N-1},1)}^{1-m}\geqslant C\Rightarrow\|x\|_{(t_{N-1},1)}\leqslant C^\frac1{1-m},
\]
yielding in particular a bound on $x(t_N) = x(1)$.

Finally for points $t\in(t_n,t_{n+1})$, we use the definition of $t_{n+1}-t_n$:
\begin{align*}
t&\leqslant t_{n+1}= t_{n+1}-t_n+t_n\\
&\leqslant (2^{m-1}+\mu)\|x\|^{1-m}_{(t_n,1)}+t_n\lesssim\|x\|^{1-m}_{(t_n,1)}\\
&\leqslant|x(t)|^{1-m}.
\end{align*}
\end{proof}


\section{Setting and main result}\label{sec:Main}

After this short interlude, we now go back to the parabolic equation \eqref{phi41}. Throughout the rest of the paper
we will say the a continuous function $u$ satisfies \eqref{phi41} on an open set $B \subseteq \R_t \times \R^d_x$ if for 
all smooth functions $\eta$ which are supported in $B$ we have
\begin{equation}\label{phi41bis}
\int \int u (-\partial_t-\Delta) \eta =-\int  f(u) \eta +  \int \zeta \eta,
\end{equation}
where the last integral $ \int \zeta \eta$ should be interpreted as the duality pairing between a distribution and a test function.
As usual when dealing with parabolic equations, regularity  will be measured with respect to the metric
\begin{equation}\label{parmetric}
d((t,x),(\bar{t}, \bar{x}))=\max\Big\{|x-\bar{x}|,\sqrt{|t-\bar{t}|}\Big\},
\end{equation}
where $|\cdot|$ denotes the Euclidean norm on $\mathbb{R}^d$. 
We introduce the parabolic ball of center $z=(x,t)$ and radius $R$ in this metric $d$, looking only into the past:  
\begin{equation}\label{paraball}
B(z,R)=\{\bar{z}=(\bar{t},\bar{x})\in\mathbb{R}\times\mathbb{R}^d, \, d(z,\bar{z})<R,\bar{t}<t\}.
\end{equation}

Recall that $P_R$ is the cylinder at distance $R$ from $P_0$, as introduced in \eqref{defpr}. Note that for $R'<R$ we have $P_{R'}+B(0,R'-R)\subset P_R$. 

\medskip
For $\alpha \in (0,1)$, we define the \hol semi-norm $[.]_\alpha$ 
\begin{equation}\label{e:def-hol}
[u]_\alpha:=\sup_{z\neq\bar{z}\in \mathbb{R}\times\mathbb{R}^d}\frac{|u(z)-u(\bar{z})|}{d(z,\bar{z})^\alpha}.
\end{equation}
We will often deal with local quantities: If $B\subset\mathbb{R}\times\mathbb{R}^d$ is a bounded set, then we define the local $\alpha$-H\"older semi-norm  $[.]_{\alpha,B}$ as in \eqref{e:def-hol} with the supremum  restricted to $z,\bar{z}\in B$. 
Similarly, $\|.\|$ denotes the supremum norm on the whole space $\R \times \R^d$ and $\|.\|_{B}$ the supremum norm over $B$.

\medskip

To measure distributions in negative \hol spaces, we introduce a family of mollification operators $\{(.)_T\}$ which are consistent with the scaling given by the heat operator $(x,t)=(l\bar{x},l^2\bar{t})$.
For this we fix a non-negative smooth function $\Psi$ with support in $-B(0,1)$ 
with $\Psi(z)\in [0,1]$ for all $z$ and with integral $1$ 
and for $T \in (0,1]$ set
$\Psi_T(x,t)=\inv{T^{d+2}}\Psi\left(\frac{x}{T},\frac{t}{T^2}\right)$. We define the operator $(\cdot)_T$ by convolution with $\Psi_T$, noting that
for any $T$, $(\cdot)_T$ is a contraction on with respect to $\| \cdot \|$. We wish to keep track of the support of the relevant functions. Since $\Psi_T$ is compactly support in $-B(0,T)$,
\begin{align}\label{Lp scaling ball}
\|h_T\|_{C}\leqslant\|h\|_{C+B(0,T)}
\end{align}
for any bounded set $C$. 
Furthermore, we mention the estimate
\begin{equation}\label{moment of psi}
\int|\Psi_T(x-y)|d(x,y)^\alpha dy\leqslant T^\alpha,
\end{equation}
which, as in \eqref{sde 2.5} above, immediately implies that for any $h\in C^\alpha$, and for any bounded set $C$,
we have  
\begin{equation}\label{mollified regularity}
\|h_T-h\|_{C}\leqslant T^\alpha\sup_{z\in C}[h]_{\alpha,B(z,T)}.
\end{equation}
Finally, we define the local $C^{\alpha-2}$ semi-norm of a distribution $\zeta$ for $\alpha-2<0$ as 
\begin{equation}\label{shauder ou}
[\zeta]_{\alpha-2,C}=\sup_{T\leq 1}\|(\zeta)_T\|_{C}T^{2-\alpha}.
\end{equation}
This is a localised version of the Besov norm of $B^{\alpha-2}_{\infty,\infty}$ as defined, for example in 
\cite[Theorem 2.34]{bahouri2011fourier}. Note that, $[\zeta]_{\alpha-2,C}$ depends only on the behaviour of the 
 distribution $\zeta$ on the set $C+B(0,1)$ (i.e. if $\zeta$ and $\tilde \zeta$ coincide when tested against 
 test-functions supported in this set, then $ [\zeta- \tilde \zeta]_{\alpha-2,C}=0$). Multiplication with a smooth 
 function is a continuous operation with respect to this norm. We have for any smooth and compactly supported function
 $\eta$
 \begin{align}\label{e:etazeta}
 [ \eta \zeta ]_{\alpha-2} \leq C(\eta) [\zeta]_{\alpha-2, \mathrm{supp}(\eta)}.
 \end{align}
 Estimates of this type are classical and are typically proved by choosing a convenient mollifying kernel $\Psi_T$,
 see e.g. \cite{OW} for estimates based on kernels $\Psi_T$ satisfying a semi-group property in $T$, or 
 \cite[Section 2.4]{bahouri2011fourier} for a  proof in the language of Littlewood-Paley theory. We refer to 
 \cite[Lemma A3]{OW} for a proof that norms defined for different kernels are equivalent.
More complicated bounds of this type are also essential in our companion paper  \cite{MW3D} and are 
discussed there at length.
 
%
%
%

\medskip

We now state our main result, to be proven in Section~\ref{sec:proof}. 
\begin{thm}\label{The Theorem}
Assume that $f(u)=u|u|^{m-1}+g(u)$ with $m\geqslant1$, $g$ bounded and $\zeta$ is of regularity $\alpha-2$ for some $\alpha>0$ in the sense of \eqref{shauder ou}. 
There exists a constant $C=C(\alpha,m,d)$ such that if $u$ is continuous and solves  \eqref{phi41} on the cylinder $P_0$ in the sense of of \eqref{phi41bis}
 then for all $R\in(0,\inv{2})$,
\begin{equation}\label{The Theorem eq}
\|u\|_{P_R}\leqslant C\max\left\{R^{-\frac{2}{m-1}},[\zeta]_{\alpha-2,P_0}^\inv{1+(m-1)\frac\alpha2},\|g\|^\frac1m\right\}.
\end{equation}

\end{thm}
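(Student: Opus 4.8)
The plan is to follow the ODE blueprint from Lemma~\ref{lem:ODE}, but carried out for the parabolic equation and with the new ingredient of the low-regularity Schauder estimate. First I would regularise the equation at a scale $T \in (0,1]$ by convolving \eqref{phi41bis} against $\Psi_T$, arriving at the perturbed equation \eqref{phi41T} with the commutator term $[\cdot^m,(\cdot)_T]u := u_T|u_T|^{m-1}-(u|u|^{m-1})_T$ on the right-hand side. On the regularised equation, whose right-hand side is now a genuine function, I would invoke the smooth ``space-time coming down from infinity'' result, Theorem~\ref{main thm poly} (the parabolic analogue of \cite[Lemma 3.8]{TW} used in the ODE case), to obtain, on a cylinder $P_{R'}$ slightly inside $P_0$, a bound of the shape
\begin{align*}
\|u_T\|_{P_{R'}} \lesssim \max\Big\{ (R'-cT)^{-\frac{2}{m-1}},\ \|\zeta_T\|_{P_0}^{\frac1m},\ \|g\|^{\frac1m},\ \big(\sup\nolimits_{P_0}|[\cdot^m,(\cdot)_T]u|\big)^{\frac1m} \Big\}.
\end{align*}
Here $\|\zeta_T\|_{P_0} \le T^{\alpha-2}[\zeta]_{\alpha-2,P_0}$ directly from the definition \eqref{shauder ou}.

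The heart of the argument is to control the commutator in terms of $\|u\|$ and a H\"older seminorm of $u$, exactly as in \eqref{sde 3}: by the mean value theorem and $|u_T|\le \|u\|_{B(z,T)}$ one gets
\begin{align*}
|[\cdot^m,(\cdot)_T]u(z)| \lesssim \|u\|_{B(z,T)}^{m-1}\, T^{\alpha}\, \sup_{z'\in B(z,T)}[u]_{\alpha,B(z',T)},
\end{align*}
using \eqref{moment of psi}/\eqref{mollified regularity} to bound $|u_T - u|$ by $T^\alpha[u]_\alpha$. The replacement of $u_T$ by $u$ on the left is handled the same way via \eqref{mollified regularity}. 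What replaces the trivial ``Schauder'' step \eqref{sde 2} of the ODE is the genuine parabolic input: the low-regularity Schauder estimate, Lemma~\ref{lemshauder}, applied to the equation \eqref{phi41} itself (localised via \eqref{e:etazeta}), which yields something like
\begin{align*}
[u]_{\alpha,B(z,T)} \lesssim T^{2-\alpha}\big(\|u\|_{B(z,2T)}^{m} + \|g\|\big) + [\zeta]_{\alpha-2,P_0},
\end{align*}
i.e. the H\"older seminorm at scale $T$ is controlled by the nonlinearity term $T^{2-\alpha}\|u\|^m$ plus the noise. Substituting this back into the commutator bound produces terms $T^2\|u\|^{2m-1}$ and $T^\alpha\|u\|^{m-1}[\zeta]_{\alpha-2,P_0}$ under an $m$-th root, exactly mirroring the ODE display after \eqref{sde 4}.

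With these estimates assembled, I would choose the scale $T$ proportional to $\|u\|_{P_0}^{-(m-1)}$ with a small enough constant (so that $T^2\|u\|^{2m-1}$ and the other $\|u\|$-dependent terms can be absorbed into $\tfrac12\|u\|$ via Young's inequality $xy\le \delta x^p + C(\delta)y^{p'}$), obtaining a self-improving bound of the form $\|u\|_{(\text{slightly smaller cylinder})} \le \max\{\tfrac12\|u\|_{P_0}, C[\zeta]_{\alpha-2,P_0}^{1/(1+(m-1)\alpha/2)}, C\|g\|^{1/m}\}$. Finally, I would run the same iteration/stopping-time scheme as in the ODE proof: define a sequence of times (here, shrinking cylinders) $P_0 \supset P_{R_1} \supset \cdots$ with the geometric spacing dictated by $\|u\|^{1-m}$ on the successive cylinders, use the halving of the $\|u\|$-contribution to sum the geometric series (the analogue of \eqref{e:ppp}), and read off that the supremum on $P_R$ is bounded by the right-hand side of \eqref{The Theorem eq}, matching $R^{-2/(m-1)}$ for the ``distance to the boundary'' contribution because of the parabolic scaling $d((t,x),(\bar t,\bar x)) = \max\{|x-\bar x|,\sqrt{|t-\bar t|}\}$.

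The main obstacle I anticipate is the bookkeeping of the nested cylinders and their parabolic ``collars'' $B(0,T)$: unlike the ODE case where one simply restarts on $[t_n,1]$, here each regularisation at scale $T$ costs a boundary layer of width $\sim T$ in all space-time directions, so one must ensure that the chosen $T \sim \|u\|^{-(m-1)}$ is always small compared to the current distance to $\partial P_0$, and that the geometric iteration still converges to reach $P_R$ for every $R<\tfrac12$. The second delicate point is keeping the exponents straight so that the iteration really closes with exponent $\tfrac{1}{1+(m-1)\alpha/2}$ on $[\zeta]_{\alpha-2,P_0}$ (the factor $\alpha/2$ rather than $\alpha$ coming from the parabolic scaling, where a H\"older exponent $\alpha$ in the metric $d$ corresponds to time-regularity $\alpha/2$) and $\tfrac1m$ on $\|g\|$; this requires applying Young's inequality with the correct conjugate exponents at each step, as flagged in the ODE proof around \eqref{sde 5}. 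The reliance on $xf'(x)\lesssim f(x)$ for polynomial $f$ enters precisely in the mean-value-theorem step for the commutator and should be invoked explicitly.
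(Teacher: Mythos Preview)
Your approach is the paper's: regularise at scale $T$, apply the smooth coming-down bound (Theorem~\ref{main thm poly}) to $u_T$, control the commutator via the mean value theorem and the local Schauder estimate (Lemma~\ref{lemshauder}), then choose $T$ and iterate over shrinking cylinders. The structure and the ingredients are correct.

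There is, however, a concrete error in your choice of $T$. You correctly record that the commutator, after inserting the Schauder bound $[u]_{\alpha}\lesssim T^{2-\alpha}\|u\|^{m}+\cdots$, produces a term $T^{2}\|u\|^{2m-1}$ under the $m$-th root; but you then set $T\sim\|u\|^{-(m-1)}$ as in the ODE. With that choice $(T^{2}\|u\|^{2m-1})^{1/m}\sim\|u\|^{1/m}$ is harmlessly small, but the noise term blows up: $(T^{\alpha-2}[\zeta])^{1/m}\sim(\|u\|^{(m-1)(2-\alpha)}[\zeta])^{1/m}$, and the Young splitting against $\|u\|$ would force exponent $1/\big(m-(m-1)(2-\alpha)\big)=1/\big(2-m+(m-1)\alpha\big)$ on $[\zeta]$, which is wrong and indeed negative for $m$ large or $\alpha$ small. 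The correct balance is $T^{2}\sim\|u\|^{-(m-1)}$, i.e.\ $T=\mu\,\|u\|_{P_0}^{-(m-1)/2}$, exactly the paper's Step~5. This is also where the $\alpha/2$ in the final exponent actually originates: not from ``time-regularity $\alpha/2$'' as you write, but from the parabolic Schauder gain $T^{2-\alpha}$ (second-order operator in the metric $d$), which forces a square root in the absorption scale for $T$; plugging $T\sim\|u\|^{-(m-1)/2}$ into $(T^{\alpha-2}[\zeta])^{1/m}$ and interpolating gives precisely $[\zeta]^{1/(1+(m-1)\alpha/2)}$.

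A smaller omission: the paper's local Schauder estimate \eqref{schauder} carries an extra free parameter $k>2$ (ratio of outer to inner ball), and the localisation produces a term $k^{-\alpha}\|u\|$ on the right. In Step~6 one must first take $k$ large to kill this term, and only then choose the prefactor $\mu$ in $T$ small; your simplified Schauder display drops this term, and without the two-parameter freedom the absorption into $\tfrac{1}{2}\|u\|_{P_0}$ will not close. Once you fix the exponent of $T$ and reinstate $k$, your iteration over the nested cylinders $P_{R_n}$ with $R_n-R_{n-1}\sim\|u\|_{P_{R_{n-1}}}^{-(m-1)/2}$ is exactly the paper's Step~7.
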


\section{Maximum principle}\label{sec:Caccio}
\subsection{Assumptions and statement}

We prove a space-time version of ``coming down from infinity'' when there is no distribution of negative regularity involved, but we allow for a more general non-linearity. Let $u$ be a  $C^2$ function defined for $z \in\R \times\R^{d}$, for which the following holds point-wise in $P_0$: 
\begin{equation}\label{eq:max rd 1}
(\partial_t-\Delta)u(z)=-f(u,z)+g(u,z).
\end{equation}
\begin{ass}\label{ass1}
We make the following assumptions on $f$ and $g$:
\begin{enumerate}
\item $g$ is a bounded function;
\item there exists an antisymmetric  $C^2$ function which we also denote $f$ such that for all $z$, for all $u>0$, $f(u,z)\geqslant f(u)$ and $-f(-u,z)\geqslant f(u)$;
\item $f''(u)\geqslant 0$ for $u>0$;
\item there exists a constant $c>1$ such that $uf'(u)\geqslant cf(u)$.
\end{enumerate}
Define $\Theta(u)=\frac{f(u)}u$. By $(4)$, $\Theta$ is increasing.
\end{ass}

\begin{thm}\label{main thm poly 1}
Let $u\in C^\infty$ solve \eqref{eq:max rd 1} for functions $f$ and $g$ satisfying Assumption~\ref{ass1}. 
There exist $\lambda=\lambda(d)>0$  and $C=C(c,d)$ such that the following point-wise bound on $u$,  holds for all $(t,x)\in(0,1)\times (-1,1)^d$:
\begin{equation}\label{eq:max theorem 1}
|u(x,t)|\leqslant C\max\Big\{\Theta^{-1}\Big(\frac1{\lambda^2\min\{t,(1-x_i)^2,(1+x_i)^2,i=1...d\}}\Big),f^{-1}(\|g\|)\Big\}.
\end{equation}
\end{thm}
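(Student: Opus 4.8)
The plan is to establish the pointwise bound via a barrier/maximum-principle argument adapted to the parabolic scaling, following the classical strategy for the deterministic ``coming down from infinity'' (as in \cite[Chapter 14]{Smoller}), but with careful bookkeeping of the distance to the parabolic boundary of $P_0$. First I would reduce to proving the bound near a fixed point $z_0=(t_0,x_0)\in(0,1)\times(-1,1)^d$: let $r$ denote the parabolic distance from $z_0$ to the boundary of $P_0$, i.e.\ essentially $r^2=\min\{t_0,(1-x_{0,i})^2,(1+x_{0,i})^2\}$, so that the parabolic cylinder $Q=(t_0-\tfrac14\lambda^2 r^2,t_0)\times B(x_0,\tfrac12\lambda r)$ (for a small dimensional constant $\lambda$) is contained in $P_0$. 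It suffices to bound $u(z_0)$ in terms of $\Theta^{-1}(1/(\lambda^2 r^2))$ and $f^{-1}(\|g\|)$.

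Next I would construct an explicit supersolution on $Q$ of the form
\[
\varphi(t,x)=A\Big(\text{(time-decay factor)}+\text{(spatial blow-up near }\partial B)\Big),
\]
the natural ansatz being something like $\varphi(t,x)=\Theta^{-1}\!\big(\tfrac{K}{t-(t_0-\tfrac14\lambda^2 r^2)}\big)$ combined with a term that blows up like a negative power of the distance to the lateral boundary of the spatial ball; one then checks, using Assumption~\ref{ass1}, that $(\partial_t-\Delta)\varphi\geq -f(\varphi)+\|g\|$ pointwise on $Q$ provided $\lambda$ is small and $A$ is a large dimensional constant. The two structural hypotheses of Assumption~\ref{ass1} do the work here: convexity $f''\geq0$ gives that $-f$ is concave so that adding the spatial barrier only helps, and the scaling condition $uf'(u)\geq cf(u)$ with $c>1$ is exactly what is needed to absorb the time-derivative of $\Theta^{-1}(K/s)$ — this is where the ODE ``$\dot x=-f(x)$ comes down like $\Theta^{-1}(1/t)$'' is hidden, since $\frac{d}{ds}\Theta^{-1}(K/s)$ is controlled by $f(\Theta^{-1}(K/s))$ precisely under that condition. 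Because $\varphi\to\infty$ at the parabolic boundary of $Q$ while $u$ is smooth hence bounded on $\bar Q$, the comparison principle applies with no assumption on the boundary data of $u$: one concludes $u\leq\varphi$ on $Q$, and evaluating at $z_0$ gives $u(z_0)\lesssim\Theta^{-1}(1/(\lambda^2 r^2))$ on the ``reaction-dominated'' event, while on the complementary event the bound $f^{-1}(\|g\|)$ takes over. The lower bound on $u$ follows symmetrically from the antisymmetry hypothesis $f(u,z)\geq f(u)$ and $-f(-u,z)\geq f(u)$, applied to $-u$.

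The main obstacle I anticipate is the verification that the barrier $\varphi$ is genuinely a supersolution all the way up to the lateral spatial boundary, where the ansatz must blow up fast enough to beat arbitrary interior values of $u$ but slow enough that $\Delta\varphi$ (which carries the wrong sign, $-\Delta\varphi\leq 0$ near a concave-up spatial profile's boundary spike) does not overwhelm the damping $-f(\varphi)$; getting the interplay of the spatial and temporal parts of $\varphi$ right, so that a single multiplicative constant $A=A(c,d)$ and scale $\lambda=\lambda(d)$ suffice, is the delicate computation. A clean way to organize this is to separate variables: pick $\varphi(t,x)=\psi(t)+\chi(x)$ or a product, handle the time part with the Osgood/$\Theta$ ODE computation and the space part with a standard one-dimensional barrier in each coordinate $x_i$, then use $f''\geq0$ to combine. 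I would also need to double check that $\Theta^{-1}$ is well-defined and the relevant monotonicity/growth of $\Theta$ near infinity follows from $(4)$, and that the $\min$ over $t$ and the $2d$ boundary distances in \eqref{eq:max theorem 1} is reproduced correctly by taking $Q$ to be the largest parabolic cylinder around $z_0$ inside $P_0$.
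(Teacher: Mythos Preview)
Your proposal is correct in spirit and very close to the paper's argument: both are barrier/maximum-principle proofs, and the structural hypotheses $(3)$--$(4)$ of Assumption~\ref{ass1} are used exactly as you anticipate (convexity to control the Laplacian of the barrier, the scaling condition $uf'(u)\geq cf(u)$ to absorb the time derivative of $\Theta^{-1}$). The difference is organizational rather than mathematical. You localize: for each $z_0$ you build a supersolution $\varphi$ on a small cylinder $Q\subset P_0$ that blows up at $\partial_{\mathrm{par}}Q$, and apply comparison there. The paper instead works globally on $P_0$: it introduces a single weight $\eta$ vanishing on $\partial P_0$, looks at the interior maximum of $u\eta$, and shows $u\eta\leq 2$. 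These are dual formulations---the paper's $2/\eta$ is precisely a global barrier of the additive form you guessed,
\[
\frac{2}{\eta}(t,x)\;\sim\;\Theta^{-1}\!\Big(\frac{1}{\lambda^2 t}\Big)+\sum_{i=1}^d\Big[\Theta^{-1}\!\Big(\frac{1}{\lambda^2(1-x_i)^2}\Big)+\Theta^{-1}\!\Big(\frac{1}{\lambda^2(1+x_i)^2}\Big)\Big]+f^{-1}(\|g\|),
\]
so the ``delicate computation'' you flag is the same one the paper carries out. The global formulation buys two things: it yields the bound at all points in one shot without a covering step, and the condition on $\eta$ (namely $\frac{(\partial_t-\Delta)\eta}{\eta}+2\frac{|\nabla\eta|^2}{\eta^2}\leq\frac{\eta}{2}f(\frac1\eta)$) together with $\eta\leq 1/f^{-1}(\|g\|)$ cleanly decouples the $g$-contribution from the supersolution verification. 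Your local version would work just as well; note that the additive ansatz (not a product) in each coordinate separately is indeed the one that closes.
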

Note that $\min\{t,(1-x_i)^2,(1+x_i)^2,i=1...d\}$ is exactly the square of the distance to the boundary of $[0,1]\times [-1,1]^d$ in the parabolic metric. In other words, this gives a bound on $||u||_{ P_R}$, depending only on $R$.

%

The condition $xf'(x)\geqslant cf(x)$ with $c>1$ is verified exactly for $f(u)=u|u|^{c-1}$, hence any function with at least polynomial growth is included in this theorem. For such monomials, $\Theta^{-1}$ become $x\mapsto x^\frac1{c-1}$. For functions with faster growth, the bound is going to be even stronger. However, some functions with super-linear but not polynomial growth  are not included. For example $f(u)=u\log(1+u)^{\alpha}$ for $\alpha>0$. For this example, $\frac{uf'(u)}{f(u)}=1+\frac{u\alpha}{(1+u)\log(1+u)}\rightarrow 1$ as $u\rightarrow\infty$, so point (4) in Assumption~\ref{ass1} is violated. We can still get a result in that case, under a  different set of assumptions:

\begin{ass}\label{ass2}
We make the following assumptions on $f$ and $g$:
\begin{enumerate}
\item $g$ is a bounded function;
\item there exists an antisymmetric $C^2$ function which we also denote $f$ such that for all $z$, for all $u>0$, $f(u,z)\geqslant f(u)$ and $-f(-u,z)\geqslant f(u)$;
\item $uf'(u)\geqslant f(u)$ and there exist two $C^2$ functions $f_1$ and $f_2$ such that $f=f_1f_2$;
\item $f_1$ is antisymmetric, $f_1''\geqslant 0$ for $u>0$;
\item $f_2\geqslant c>0$ and
\begin{equation}\label{eq:max cond f1f2}
f_2(u)\geqslant\max\Big\{\frac1{\Big(\frac{uf_1'(u)}{f_1(u)}-1\Big)^2},\frac1{\frac{uf_1'(u)}{f_1(u)}-1}\Big\}.
\end{equation}
\end{enumerate}
Define now $\Theta(u)=\frac{f_1(u)}u$. $\Theta$ is increasing by condition $(5)$.
\end{ass}

In the example where we want to take $f_1(u)=u\log(1+u)^\alpha$, one can easily check that in order to satisfy condition $(5)$, $f_2$ should be $\Big(\frac{1+u}{\alpha u}\Big)^2\frac{\log(1+u)^2}\alpha$ and hence $f(u)=\frac{(1+u)^2}{\alpha^2u}\log(1+u)^{2+\alpha}$ and $\Theta^{-1}(x)=\exp(x^\frac1\alpha)-1$.

%

\begin{thm}\label{main thm poly}
Let $u\in C^\infty$ solve \eqref{eq:max rd 1} for functions $f$ and $g$ satisfying Assumption~\ref{ass2}. 
There exist $\lambda=\lambda(d)>0$  and $C=C(c,d)$ such that the following point-wise bound on $u$,  holds for all $(t,x)\in(0,1)\times (-1,1)^d$:
\begin{equation}\label{eq:max theorem}
|u(x,t)|\leqslant C\max\Big\{\Theta^{-1}\Big(\frac1{\lambda^2\min\{t,(1-x_i)^2,(1+x_i)^2,i=1...d\}}\Big),f^{-1}(\|g\|)\Big\}.
\end{equation}
\end{thm}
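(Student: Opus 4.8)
The plan is a comparison argument: we construct an explicit classical supersolution $V$ of \eqref{eq:max rd 1} on a well-chosen parabolic sub-cylinder, which blows up on the parabolic boundary of that cylinder at the rate $\Theta^{-1}(\lambda^{-2}\,\mathrm{dist}^{-2})$, and then invoke the parabolic maximum principle to get $u\leq V$. The two-sided estimate \eqref{eq:max theorem} reduces to the upper bound alone: if $u$ solves \eqref{eq:max rd 1}, then $\tilde u:=-u$ solves it with $f,g$ replaced by $\tilde f(v,z):=-f(-v,z)$, $\tilde g(v,z):=-g(-v,z)$, and the antisymmetry built into Assumption~\ref{ass2}(2) (the inequalities $-f(-v,z)\geq f(v)$ and $f(v,z)\geq f(v)$ for $v>0$) shows that $(\tilde f,\tilde g)$ again satisfies Assumption~\ref{ass2} with the \emph{same} comparison function $f$ and $\|\tilde g\|=\|g\|$, so an upper bound for $\tilde u$ is the desired lower bound for $u$.

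\emph{The barrier.} Fix $z_0=(t_0,x_0)$ in the open cylinder and set $r^2:=\min\{t_0,(1-x_{0,i})^2,(1+x_{0,i})^2\}$, so that the parabolic cylinder $Q:=(t_0-r^2,t_0]\times\prod_{i}(x_{0,i}-r,x_{0,i}+r)$ is contained in $[0,1]\times[-1,1]^d$. On $Q$ put
\[
\psi(t,x):=\frac{1}{t-(t_0-r^2)}+\sum_{i=1}^d\Big(\frac{1}{(x_{0,i}+r-x_i)^2}+\frac{1}{(x_i-x_{0,i}+r)^2}\Big),
\]
a smooth positive function that blows up on the parabolic boundary of $Q$, with $\psi(z_0)=(2d+1)r^{-2}$, $\partial_t\psi-\Delta\psi\geq-C(d)\psi^2$, and $|\nabla\psi|^2\leq C(d)\psi^3$. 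For a large $\kappa=\kappa(d)$ we set $V:=\Theta^{-1}(\kappa\psi)+f^{-1}(2\|g\|)$ and $V_0:=V-f^{-1}(2\|g\|)$, which is $C^2$ on $Q$ once $\kappa$ is large enough that $\kappa\psi$ lies in the range of the increasing map $\Theta$. Writing $\gamma(u):=uf_1'(u)/f_1(u)-1>0$ and differentiating the identity $\Theta(V_0)=\kappa\psi$ twice, using $\Theta'(u)=\Theta(u)\gamma(u)/u$ and $\Theta''(u)=\Theta(u)(\gamma(u)^2-\gamma(u)+u\gamma'(u))/u^2$, one gets
\[
(\partial_t-\Delta)V=\frac{(\partial_t\psi-\Delta\psi)\,V_0}{\psi\,\gamma(V_0)}+\frac{\big(\gamma(V_0)^2-\gamma(V_0)+V_0\gamma'(V_0)\big)\,V_0\,|\nabla\psi|^2}{\psi^2\,\gamma(V_0)^3}.
\]
Now $f_1(V_0)=V_0\Theta(V_0)=V_0\kappa\psi$, so $f(V_0)=V_0\kappa\psi\,f_2(V_0)$. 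The bound $\gamma f_2\geq1$ that follows from \eqref{eq:max cond f1f2} bounds the first term from below by $-C(d)\kappa^{-1}f(V_0)$; and the identity $\gamma^2-\gamma+u\gamma'=u^2f_1''/f_1-2\gamma\geq-2\gamma$, valid because $f_1''\geq0$ (Assumption~\ref{ass2}(4)), together with $\gamma^2f_2\geq1$ (again \eqref{eq:max cond f1f2}), bounds the second term from below by $-C(d)\kappa^{-1}f(V_0)$ as well. Choosing $\kappa=\kappa(d)$ large gives $(\partial_t-\Delta)V\geq-\tfrac12 f(V_0)\geq-\tfrac12 f(V)$ (since $f$ is increasing on $(0,\infty)$, from $uf'(u)\geq f(u)>0$). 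Since $f(V)\geq f(f^{-1}(2\|g\|))=2\|g\|$ and $f(V,z)\geq f(V)$, $g(V,z)\leq\|g\|$, this yields $(\partial_t-\Delta)V\geq-f(V,z)+g(V,z)$ on the interior of $Q$: $V$ is a classical supersolution.

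\emph{Maximum principle and conclusion.} Since $u$ is continuous, hence bounded, on $\overline Q$ while $V\to+\infty$ on the parabolic boundary of $Q$, any positive maximum of $u-V$ over $\overline Q$ is attained at a point $z^\ast$ in the interior of $Q$ or on its top $\{t=t_0\}$, where $(\partial_t-\Delta)(u-V)(z^\ast)\geq0$. Subtracting the equation for $u$ from the supersolution inequality and using $u(z^\ast)>V(z^\ast)>0$, $f(u(z^\ast),z^\ast)\geq f(u(z^\ast))$, $g(u(z^\ast),z^\ast)\leq\|g\|$ and monotonicity of $f$, one obtains $f(u(z^\ast))\leq f(V(z^\ast))$, hence $u(z^\ast)\leq V(z^\ast)$, a contradiction. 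Thus $u\leq V$ on $\overline Q$, in particular $u(z_0)\leq\Theta^{-1}\big(\kappa(2d+1)r^{-2}\big)+f^{-1}(2\|g\|)$. Taking $\lambda=\lambda(d)$ with $\lambda^{-2}=\kappa(2d+1)$ and using $f^{-1}(2y)\leq2f^{-1}(y)$ (a consequence of $u\mapsto f(u)/u$ being nondecreasing) gives $u(z_0)\leq C\max\{\Theta^{-1}(\lambda^{-2}r^{-2}),f^{-1}(\|g\|)\}$; as $z_0$ was arbitrary and $r^2=\min\{t_0,(1-x_{0,i})^2,(1+x_{0,i})^2\}$, this is \eqref{eq:max theorem} for $u$, hence with the reduction step also for $|u|$.

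\emph{Main obstacle.} The heart of the matter is making $V$ a supersolution with constants independent of the growth of $f$: the second-order contribution of $\Delta$ to $(\partial_t-\Delta)\Theta^{-1}(\kappa\psi)$ carries a factor $\gamma(V_0)^{-2}$ which is unbounded for slowly growing $f_1$ (e.g.\ $f_1(u)\sim u\log(1+u)^\alpha$), and it must be absorbed into $f(V_0)=f_1(V_0)f_2(V_0)$ without $\kappa$ (hence $\lambda$) depending on $f$. This is exactly what the factorisation $f=f_1f_2$, the sign condition $f_1''\geq0$, and the quantitative lower bound \eqref{eq:max cond f1f2} on $f_2$ are engineered to achieve; verifying the two resulting scalar inequalities relating $\gamma$ and $f_2$, and checking the elementary derivative bounds on $\psi$, is the remaining routine work.
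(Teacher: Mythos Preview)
Your proof is correct and closely parallels the paper's, though you package the same idea differently. The paper multiplies $u$ by a cutoff $\eta$ vanishing on $\partial P_0$ (essentially the reciprocal of a barrier), analyses the maximum of $u\eta$, and verifies the differential inequality
\[
\frac{(\partial_t-\Delta)\eta}{\eta}+2\frac{|\nabla\eta|^2}{\eta^2}\leq \frac{\eta}{2}f\Big(\frac1\eta\Big)
\]
for a specific choice of $\eta$. You instead build the barrier $V\approx 1/\eta$ directly, localised to a sub-cylinder $Q$ around each point, and run a standard supersolution comparison. The substance of the argument---reducing the first-order term to the condition $\gamma f_2\geq 1$ and the second-order term (after using $f_1''\geq 0$) to $\gamma^2 f_2\geq 1$, i.e.\ exactly \eqref{eq:max cond f1f2}---is identical in both.

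Two small remarks on presentation. In your maximum-principle step you invoke ``the supersolution inequality'' but what you actually need and have proved is the sharper intermediate bound $(\partial_t-\Delta)V\geq -f(V)+\|g\|$; with the weaker form $(\partial_t-\Delta)V\geq -f(V,z)+g(V,z)$ the subtraction does not close because you have no lower bound on $f(V,z^\ast)$ in terms of $f(V(z^\ast))$. This is only a wording issue. Second, the sentence ``$C^2$ on $Q$ once $\kappa$ is large enough that $\kappa\psi$ lies in the range of $\Theta$'' tacitly assumes $\Theta(0+)=0$ (equivalently $f_1'(0)=0$); if $f_1'(0)>0$ then no $\kappa=\kappa(d)$ works uniformly in $r$. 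The paper's construction has the same implicit assumption, and it holds in all the examples considered, so this is a shared technicality rather than a gap in your argument.
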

Theorem \ref{main thm poly 1} is implied by Theorem \ref{main thm poly} by choosing  $f_1=f$ and $f_2=\frac1{(c-1)^2}$.

\begin{rmq}
The fact that under these more general assumptions $\Theta$ is not simply defined by $f(u)/u$ but instead grows more slowly, is the 
reason why  we do not get an equivalent of Theorem~\ref{The Theorem}, in the case of slower than polynomial growth.
\end{rmq}
\subsection{Bound on the remainder}\label{subsec:Cacciodpd}

A first corollary of this result is a ``coming down from infinity'' result for the singular equation \eqref{phi41} with  general non linearity. In the manner of \cite{DPD}, we expand around the solution to the linear equation: let $w$ be the solution to  
\begin{equation}\label{eq:linear}
(\partial_t-\Delta)w=\zeta,
\end{equation}
with Dirichlet boundary conditions on $P_0$. We will show (in Lemma~\ref{lemshauder}) that $w\in L^\infty$ if $\zeta\in C^{\alpha-2}$ for $\alpha>0$. 
Then define $v=u-w$. If $u$ is a solution to
\[
\heat u(z)=-f(u,z)+g(u,z)+\zeta,
\]
 where we assume that $f,g$ satisfies the Assumption~\ref{ass2} then $v$ is a solution to 
\begin{equation}\label{eq:dpd}
\heat v(z)=-f(v+w,z)+g(v+w,z)
\end{equation}
on $P_0$.
We now use the $w$-dependent decomposition $f(v+w,z) = \tilde{f}(v,z) +\tilde{g}(v,z)$ defined by
\begin{align*}
\tilde{f}(v,z) = 
\begin{cases}
f(v+w,z)  \qquad &\text{if  } |v(z)| \geq 2 |w(z)| \\
f\Big(\frac{v(z)}{2} \Big) \qquad  &\text{else},
\end{cases}
\end{align*}
and $\tilde{g}(v,z) = f(v+w,z)-  \tilde{f}(v,z)$. Then, on the one hand by monotonicity of $f$ we have
$\tilde{f}(v,z)\geqslant f(\frac{v}{2})$ and on the other hand $\|\tilde{g}\|\leqslant f(3\|w\|)$.
 The Assumptions~\ref{ass2} are then satisfied with $\tilde{f}$ and $g+\tilde{g}$ and we can apply Theorem~\ref{main thm poly} to get a bound on $v$, and then the triangle inequality to get bounds on $u$. 
 We have
\[
f^{-1}(\|g+\tilde{g}\|)\leqslant f^{-1}(2\|g\|)+6\|w\|.
\]
 A corollary of Theorem~\ref{main thm poly} is then:
\begin{cor}\label{cor:dpd}
Assume $\zeta\in C^{\alpha-2}$ for some $\alpha>0$. If $u$ is solution to \eqref{phi41} and $w$ is solution to \eqref{eq:linear}, then there exists constants $C= C(c,d,\alpha)$ and $\lambda=\lambda(d)$ such that
\begin{equation}\label{eq:caccio dpd}
\|u\|_{P_R}\leqslant C \max\Big\{\Theta^{-1}((\lambda R)^{-2}),f^{-1}(2\|g\|),\|w\|\Big\}.
\end{equation}
\end{cor}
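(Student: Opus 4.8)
\textbf{Proof proposal for Corollary~\ref{cor:dpd}.}

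The plan is to deduce the corollary as a direct application of Theorem~\ref{main thm poly} to the shifted equation~\eqref{eq:dpd}, following exactly the decomposition set up in the paragraph preceding the statement. First I would recall that by Lemma~\ref{lemshauder} the linear solution $w$ of \eqref{eq:linear} lies in $L^\infty(P_0)$ with $\|w\|_{P_0} \leq C(\alpha) [\zeta]_{\alpha-2,P_0}$, so the quantity $\|w\|$ appearing on the right hand side of \eqref{eq:caccio dpd} is finite. Setting $v = u-w$, the equation satisfied by $v$ is $\heat v = -f(v+w,z) + g(v+w,z)$ on $P_0$. The key point is to verify that the splitting $f(v+w,z) = \tilde f(v,z) + \tilde g(v,z)$ introduced above produces a pair $(\tilde f, g+\tilde g)$ satisfying Assumption~\ref{ass2}, so that Theorem~\ref{main thm poly} applies to $v$ verbatim.

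The verification proceeds point by point through Assumption~\ref{ass2}. Item (1): $g+\tilde g$ is bounded since $\|\tilde g\| \leq f(3\|w\|)$ by monotonicity of $f$ — indeed, where $|v(z)| \geq 2|w(z)|$ we have $\tilde g(v,z) = 0$, while in the complementary region $|v+w| \leq 3\|w\|$ and $|v/2| \leq \|w\|$, so $|\tilde g(v,z)| = |f(v+w,z) - f(v/2)| \leq f(3\|w\|)$, using $f(u,z) \geq f(u)$ and antisymmetry to bound $f(v+w,z)$ in absolute value. Item (2): one checks that $\tilde f(v,z) \geq f(v/2)$ for $v>0$; in the region $|v| \geq 2|w|$ this is monotonicity $f(v+w,z) \geq f(v+w) \geq f(v/2)$ since $v+w \geq v/2$, and in the other region it is an equality. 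The antisymmetric comparison function for $\tilde f$ is then $v \mapsto f(v/2)$. Items (3)--(5): here one takes $\tilde f_1(v) = f_1(v/2)$ and $\tilde f_2(v) = f_2(v/2)$; since $f = f_1 f_2$ one has $f(v/2) = \tilde f_1(v)\tilde f_2(v)$, convexity of $f_1$ on the positives is inherited by $\tilde f_1$ after the affine rescaling, $\tilde f_2 \geq c > 0$ is immediate, and the inequality \eqref{eq:max cond f1f2} for $\tilde f_1, \tilde f_2$ follows from the same inequality for $f_1, f_2$ evaluated at $v/2$ because the ratio $u f_1'(u)/f_1(u)$ is invariant under the rescaling $u \mapsto u/2$. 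Crucially, the associated $\tilde\Theta(v) = \tilde f_1(v)/v = \tfrac12 f_1(v/2)/(v/2) = \tfrac12 \Theta(v/2)$, so $\tilde\Theta^{-1}(x) = 2\Theta^{-1}(2x)$; applying Theorem~\ref{main thm poly} to $v$ on $(0,1)\times(-1,1)^d$ therefore gives, for $(t,x)$ in the cylinder, the point-wise bound
\begin{equation*}
|v(x,t)| \leq C \max\Big\{ \Theta^{-1}\Big(\tfrac{2}{\lambda^2 \min\{t,(1-x_i)^2,(1+x_i)^2\}}\Big), f^{-1}(\|g+\tilde g\|)\Big\}.
\end{equation*}

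From here I would finish as follows. Translating to the cylinder $P_R$ uses that $\min\{t,(1-x_i)^2,(1+x_i)^2\} \geq R^2$ there, so the first term is $\leq C\Theta^{-1}((\lambda' R)^{-2})$ after absorbing the factor $2$ into a redefined $\lambda$. The second term is controlled using $\|\tilde g\| \leq f(3\|w\|)$, the elementary inequality $f^{-1}(a+b) \leq f^{-1}(2a) + f^{-1}(2b)$ (valid for increasing $f$ via $f^{-1}(a+b) \leq f^{-1}(2\max\{a,b\})$), and $f^{-1}(2f(3\|w\|)) \leq 6\|w\|$, which follows from the polynomial-type lower growth $uf'(u) \geq f(u)$ forcing $f(\kappa u) \geq \kappa f(u)$ for $\kappa \geq 1$ — so $f(6\|w\|) \geq 2 f(3\|w\|)$ — exactly the estimate $f^{-1}(\|g+\tilde g\|) \leq f^{-1}(2\|g\|) + 6\|w\|$ recorded before the statement. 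Combining and using $\|u\|_{P_R} \leq \|v\|_{P_R} + \|w\|$ yields \eqref{eq:caccio dpd}, possibly enlarging $C$ and relabelling $\lambda$. The only mildly delicate point — and the place I would be most careful — is checking that the rescaled pair $(\tilde f_1, \tilde f_2)$ genuinely satisfies the structural condition \eqref{eq:max cond f1f2}; everything else is bookkeeping with monotonicity and the triangle inequality. It is also worth noting that $\tilde f$ is only piecewise defined and need not be globally $C^2$ at the interface $|v| = 2|w|$, so strictly one should either smooth the cutoff or observe that Theorem~\ref{main thm poly} is applied through its role in controlling the maximum of $v$, where the comparison function $f(v/2)$ is what is actually used; I would add a remark to that effect rather than belabour the regularity of $\tilde f$.
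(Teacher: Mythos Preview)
Your proposal is correct and follows exactly the approach the paper itself uses: the corollary is stated as a direct consequence of Theorem~\ref{main thm poly} applied to $v=u-w$ via the decomposition $\tilde f,\tilde g$ described in the paragraph preceding the statement, together with the bound $f^{-1}(\|g+\tilde g\|)\leq f^{-1}(2\|g\|)+6\|w\|$ and the triangle inequality. Your only unnecessary worry is the regularity of $\tilde f$ at the interface $|v|=2|w|$: Assumption~\ref{ass2}(2) and the proof of Theorem~\ref{main thm poly} require the \emph{comparison} function $v\mapsto f(v/2)$ to be $C^2$, not the $(v,z)$-dependent $\tilde f$, so no smoothing is needed.
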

Keeping in mind the motivation of stochastic PDEs, where $\zeta$ is the white noise, the drawback of the expansion around the solution to the linear equation is that the integrability of $u$ that we get out of this result is at best the one of $w$. As we will see in section~\ref{sec:Optimality}, Theorem~\ref{main thm poly} allows for better estimates than this.

\subsection{Proof of Theorem~\ref{main thm poly}}
We only prove the bound for the positive part of $u$. The bound for the negative part follows by symmetry.
Let $\eta$ be a continuous function defined on $\R_+\times[-1,1]^d$, $C^2$ and strictly positive on the interior and such that $\eta=0$ on the boundary. 
Either $u\eta$ attains its maximum on $[0,1]\times [-1,1]^d$ at some point $z_0\in(0,1]\times (-1,1)^d$, or it is non-positive, in which case $u\leqslant0$ in $[0,1]\times \{|x|\leqslant1\}$. Assuming this is not the case, we get that at the maximum point, $0=\grad (u\eta)(z_0)$, i.e. 
\begin{equation}\label{eq:max grad u eta}
\grad u=-\frac{\grad\eta}{\eta}u.
\end{equation}
If $z_0\in \{1\}\times (-1,1)^d$, then $\partial_tu(z_0)\geqslant 0$. Else, $\partial_tu(z_0)= 0$. Additionally, $\Delta u(z_0)\leqslant 0$ and therefore at the maximum we have
\begin{align*}
0\leqslant& \heat(u\eta)=\eta\heat u+u\heat\eta-2\grad u.\grad \eta\\
\overset{\eqref{eq:max rd 1};\eqref{eq:max grad u eta}}{=} &-\eta (f(u,z)-g(u,z))+u\Big(\heat\eta+2\frac{|\grad\eta|^2}{\eta}\Big).
\end{align*}
Assume $\eta$ satisfies the following inequality:
\begin{equation}\label{eq:max eta}
 \frac{\heat\eta}\eta+2\frac{|\grad\eta|^2}{\eta^2}\leqslant \frac\eta2 f(\frac1\eta).
\end{equation}
Then we get
\begin{equation}\label{eq:max u eta}
 \frac{f(u)}{u}\leqslant \frac\eta2 f\Big(\frac1\eta\Big)+\frac{\|g\|}{u}\leqslant 2\max\Big{\{}\frac\eta2 f\Big(\frac1\eta\Big),\frac{\|g\|}{u}\Big{\}}.
\end{equation}
If the maximum is realised by the first term, then $\frac{f(u)}{u}\leqslant \eta f(\inv{\eta})$. Since $uf'(u)\geqslant f(u)$, $u\mapsto\frac{f(u)}u$ is increasing, we have that at $z_0$, $u\eta\leqslant 1$. If the maximum is realised by the second term, then it has to be bigger than the first one :
\[
\frac\eta2 f\Big(\frac1\eta\Big)\leqslant\frac{\|g\|}{u}\Rightarrow u\eta\leqslant2\frac{\|g\|}{f(\frac1\eta)}.
\]
We then have that at $z_0$, $u\eta\leqslant 2$ under the condition
\begin{equation}\label{condition eta}
\eta\leqslant\frac1{f^{-1}(\|g\|)}.
\end{equation}

 In both cases, we obtain that $u\leqslant \frac2\eta$ on all of $[0,1]\times \{|x|\leqslant1\}$. With a choice of $\eta$ satisfying the inequalities \eqref{eq:max eta} and \eqref{condition eta}, we obtain good bounds on the function $u$. We choose the following for $z=(t,x)\in(0,\infty)\times(-1,1)$, for some value $\lambda$ to be defined:
\begin{equation}\label{eq:max def eta}
\eta(x,t)=\frac1{\Theta^{-1}(\frac1{\lambda^2 t})+\sum_{i=1}^d\left(\Theta^{-1}(\frac1{\lambda^2(1+x_i)^2})+\Theta^{-1}(\frac1{\lambda^2(1-x_i)^2})\right)+f^{-1}(\|g\|)},
\end{equation}
and we continuously extend with the value $0$ on the boundary of the domain.
This choice of $\eta$ guarantees a bound on $u$ that is related to the distance from the boundary of $[0,1]\times[-1,1]^d$, independently of the boundary conditions. Indeed,
\begin{align}\label{eq:max comp eta}
( 2d+1)\Theta^{-1}\Big(&\frac1{\lambda^2\min_i\{ t,(1+x_i)^2,(1-x_i)^2\}}\Big)\nonumber\\
&\geqslant\frac1\eta-f^{-1}(\|g\|)\geqslant\Theta^{-1}\Big(\frac1{\lambda^2\min_i\{ t,(1+x_i)^2,(1-x_i)^2\}}\Big).
\end{align}
It also satisfies $0\leqslant\eta\leqslant\frac1{f^{-1}(\|g\|)}$.
 We will now check \eqref{eq:max eta}:
\[
\partial_t\eta=\frac{\lambda^2}{(\lambda^2 t)^2}\frac1{\Theta'\circ\Theta^{-1}(\frac1{\lambda^2 t})}\eta^2.
\]
 We use $v=\Theta^{-1}(\frac1{\lambda^2 t})\leqslant\frac1\eta$ and $f= f_1f_2$. Given that $\Theta'(y)=\frac{f_1'(y)}y-\frac{f_1(y)}{y^2}$, we get
\[
\frac{\partial_t\eta}{\eta^2f(\frac1\eta)}\leqslant\frac{\lambda^2}{f(\frac1\eta)}\frac{\Theta(v)^2}{\Theta'(v)}\leqslant \frac{\lambda^2}{f_1(v)f_2(v)}\frac{\Theta(v)^2}{\Theta'(v)}=\frac{\lambda^2}{f_2(v)}\frac1{\frac{vf_1'(v)}{f_1(v)}-1}.
\]
Applying the condition \eqref{eq:max cond f1f2} gives a bound on this, independent of $v$. We now consider the spatial derivatives.
\[
\partial_i\eta=\frac{1}{\lambda^2}\Big(\frac{2}{(1+x_i)^3}\frac1{\Theta'\circ\Theta^{-1}(\frac1{\lambda^2(1+x_i)^2})}-\frac{2}{(1-x_i)^3}\frac1{\Theta'\circ\Theta^{-1}(\frac1{\lambda^2(1-x_i)^2})}\Big)\eta^2.
\]
\begin{align*}
\partial_i^2\eta=&-\frac{1}{\lambda^2}\Big(\frac{6}{(1+x_i)^4}\frac1{\Theta'\circ\Theta^{-1}(\frac1{\lambda^2(1+x_i)^2})}+\frac{6}{(1-x_i)^4}\frac1{\Theta'\circ\Theta^{-1}(\frac1{\lambda^2(1-x_i)^2})}\Big)\eta^2\\
+& \frac{1}{\lambda^4}\Big(\frac{4}{(1+x_i)^6}\frac{\Theta''\circ\Theta^{-1}(\frac1{\lambda^2(1+x_i)^2})}{\big(\Theta'\circ\Theta^{-1}(\frac1{\lambda^2(1+x_i)^2})\big)^3}+\frac{4}{(1-x_i)^6}\frac{\Theta''\circ\Theta^{-1}(\frac1{\lambda^2(1-x_i)^2})}{\big(\Theta'\circ\Theta^{-1}(\frac1{\lambda^2(1-x_i)^2})\big)^3}\Big)\eta^2\\
+& \frac{2}{\lambda^4} \Big(\frac{2}{(1+x_i)^3}\frac1{\Theta'\circ\Theta^{-1}(\frac1{\lambda^2(1+x_i)^2})}-\frac{2}{(1-x_i)^3}\frac1{\Theta'\circ\Theta^{-1}(\frac1{\lambda^2(1-x_i)^2})}\Big)^2\eta^3.
\end{align*}
Note that the last line is equal to $\frac{\partial_i\eta}{\eta^2}2\eta\partial_i\eta=2\frac{(\partial_i\eta)^2}{\eta}$, hence it will cancel when computing $-\partial_i^2\eta+2\frac{(\partial_i\eta)^2}{\eta}.$
For the remaining terms, we use  $v_{1,i}=\Theta^{-1}(\frac1{\lambda^2(1+x_i)^2})$ and $v_{2,i}=\Theta^{-1}(\frac1{\lambda^2(1-x_i)^2})$ and we get:
\begin{align*}
\frac1{\lambda^2f(\frac1\eta)}\Big(-\frac{\partial_i^2\eta}{\eta^2}+2\frac{(\partial_i\eta)^2}{\eta^3}\Big)=&\frac{6\Theta(v_{1,i})^2}{f(\frac1\eta)(\frac{f_1'(v_{1,i})}{v_{1,i}}-\frac{f_1(v_{1,i})}{v_{1,i}^2})}+\frac{6\Theta(v_{2,i})^2}{f(\frac1\eta)(\frac{f_1'(v_{2,i})}{v_{2,i}}-\frac{f_1(v_{2,i})}{v_{2,i}^2})}\\
&-\frac{4\Theta(v_{1,i})^3}{f(\frac1\eta)}\frac{\frac{f''(v_{1,i})}{v_{1,i}}-\frac2{v_{1,i}}(\frac{f_1'(v_{1,i})}{v_{1,i}}-\frac{f_1(v_{1,i})}{v_{1,i}^2})}{(\frac{f_1'(v_{1,i})}{v_{1,i}}-\frac{f_1(v_{1,i})}{v_{1,i}^2})^3}\\
&-\frac{4\Theta(v_{2,i})^3}{f(\frac1\eta)}\frac{\frac{f''(v_{2,i})}{v_{2,i}}-\frac2{v_{2,i}}(\frac{f_1'(v_{2,i})}{v_{2,i}}-\frac{f_1(v_{2,i})}{v_{2,i}^2})}{(\frac{f_1'(v_{2,i})}{v_{2,i}}-\frac{f_1(v_{2,i})}{v_{2,i}^2})^3}.
\end{align*}
Using that $f$ is increasing, the bound \eqref{eq:max comp eta} and $f=f_1f_2$, we have that $f(\frac1\eta)\geqslant f_1(v_{j,i})f_2(v_{j,i})$ for $j\in\{1,2\}$. We also know that $f_2''>0$, hence we get: 
\begin{align*}
\frac1{f(\frac1\eta)}\Big(-\frac{\partial_i^2\eta}{\eta^2}+2\frac{(\partial_i\eta)^2}{\eta^3}\Big)
\leqslant&\frac{6\lambda^2}{f_2(v_{1,i})(\frac{f_1'(v_{1,i})v_{1,i}}{f_1(v_{1,i})}-1)}+\frac{6\lambda^2}{f_2(v_{2,i})(\frac{f_1'(v_{2,i})v_{2,i}}{f_1(v_{2,i})}-1)}\\
+&\frac{8\lambda^2}{f_2(v_{1,i})(\frac{f_1'(v_{1,i})v_{1,i}}{f_1(v_{1,i})}-1)^2}+\frac{8\lambda^2}{f_2(v_{2,i})(\frac{f_1'(v_{2,i})v_{2,i}}{f_1(v_{2,i})}-1)^2}.
\end{align*}
We conclude this proof by using the condition \eqref{eq:max cond f1f2} and setting  $\lambda=(28d+1)^{-\frac12}$. 

\section{Proof of the main result}\label{sec:proof}
\subsection{Low regularity Schauder estimate}
We give here a proof of a low regularity Schauder estimate in our setting.
\begin{lem}\label{lemshauder}
Let $u$ be compactly supported  in $B(0,1)$ and let $f:=(\partial_t-\Delta)u$. Then for $\alpha >0$ there exists a constant $C=C(\alpha,d)$ such that
\begin{equation}\label{Schauder lemma}
[u]_\alpha\leqslant C\sup_{T\leqslant 1}T^{2-\alpha}\|f_T\|.
\end{equation}
\end{lem}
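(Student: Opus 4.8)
The plan is to prove the Schauder estimate \eqref{Schauder lemma} by decomposing $u$ using the mollification operators $(\cdot)_T$ and summing up the contributions dyadically. Write $u = u_{T_0} + \sum_{k\geq 0}(u_{T_{k+1}} - u_{T_k})$ with $T_k = 2^{-k}$ for $k\geq 1$ and $T_0=1$; more precisely, I would like to telescope so that $u - u_1 = \sum_{k\geq 1}(u_{2^{-k}} - u_{2^{-k+1}})$, using that $u_T \to u$ uniformly as $T\to 0$ (which holds since $u$ is continuous and compactly supported). The key point is then to estimate, for each dyadic scale, both the sup-norm of the increment $u_{T_{k+1}} - u_{T_k}$ and the sup-norm of its gradient, because a function with controlled dyadic increments in a Lipschitz-type norm at scale $T$ is $\alpha$-Hölder provided the bounds decay like $T^\alpha$ and $T^{\alpha-1}$ respectively.

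First I would record the elementary identity $(\partial_t - \Delta)(u_T) = ((\partial_t-\Delta)u)_T = f_T$, since convolution commutes with the constant-coefficient operator $(\partial_t-\Delta)$. Next, the heart of the matter: I need to express $u_{T_{k+1}} - u_{T_k}$ (and its spatial gradient, and a suitable time-increment) directly in terms of $f$ at comparable scales. Writing $\Psi_{T_{k+1}} - \Psi_{T_k}$ as $(\partial_t - \Delta)$ applied to a kernel — i.e. introducing the "integrated" kernels obtained by solving the backward heat equation against the difference of mollifiers, which are smooth, compactly supported at scale $\sim T_k$, and have integral zero — lets me write $u_{T_{k+1}} - u_{T_k}$ as $f$ convolved against such a kernel, hence as $f_{S}$ (for $S\sim T_k$) convolved against a fixed-mass kernel at scale $T_k$. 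From $\|f_S\| \leq \sup_{T\leq 1} T^{\alpha-2}\|f_T\|\cdot S^{\alpha-2}$ and the fact that the auxiliary kernel at scale $T_k$ has $L^1$-norm of order $T_k^2$ for the value, $T_k$ for the gradient (by the parabolic scaling $x\sim l$, $t\sim l^2$), I obtain
\begin{align*}
\|u_{T_{k+1}} - u_{T_k}\| &\lesssim T_k^{\alpha-2}\cdot T_k^{2}\,M = T_k^{\alpha} M,\\
\|\nabla(u_{T_{k+1}} - u_{T_k})\| &\lesssim T_k^{\alpha-2}\cdot T_k\, M = T_k^{\alpha-1} M,
\end{align*}
where $M := \sup_{T\leq 1}T^{2-\alpha}\|f_T\|$, and similarly a bound of order $T_k^{\alpha-2}$ on the time-increment over a time-interval of length $T_k^2$. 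Summing the geometric series in $k$ (which converges precisely because $\alpha>0$ for the value and, crucially, because we only need $\alpha - 1 < 0 \leq \alpha$... but note summing $\nabla$-increments needs $\alpha>0$ too via the standard interpolation trick rather than direct summation) then gives that for any two points $z,\bar z$ at parabolic distance $r$, choosing $k$ with $T_k\sim r$ and splitting $|u(z)-u(\bar z)| \leq |u_{T_k}(z)-u_{T_k}(\bar z)| + |u(z)-u_{T_k}(z)| + |u(\bar z)-u_{T_k}(\bar z)|$, the first term is controlled by $r\cdot\|\nabla u_{T_k}\| + r^2\|\partial_t u_{T_k}\|/\ldots \lesssim r^\alpha M$ and the last two by $\sum_{j\geq k}\|u_{T_{j+1}}-u_{T_j}\| \lesssim T_k^\alpha M \lesssim r^\alpha M$.

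The main obstacle I anticipate is the bookkeeping for the auxiliary "primitive" kernels: one must check that solving $(\partial_t - \Delta)K_k = \Psi_{T_{k+1}} - \Psi_{T_k}$ (with the past-pointing support convention, integrating the heat semigroup forward in the appropriate direction) indeed yields kernels $K_k$ that are smooth, supported in a parabolic ball of radius $O(T_k)$, and satisfy the scaling bounds $\|K_k\|_{L^1} = O(T_k^2)$, $\|\nabla K_k\|_{L^1} = O(T_k)$ — this is where the precise parabolic scaling of $\Psi_T$ is used and is the only genuinely technical point. An alternative, perhaps cleaner, route avoiding explicit kernels: work with the operators directly, noting $(\partial_t-\Delta)(u_{T_{k+1}}-u_{T_k}) = f_{T_{k+1}} - f_{T_k}$ is a function of sup-norm $\lesssim T_k^{\alpha-2}M$ supported at scale $O(1)$, and invoke the already-available smooth parabolic regularity theory at unit scale combined with rescaling to transfer this to the stated bounds on $u_{T_{k+1}}-u_{T_k}$ and its derivatives. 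Either way the geometric summation closes because $\alpha>0$, and compact support in $B(0,1)$ ensures no boundary issues and that all convolutions are well-defined with $T\leq 1$.
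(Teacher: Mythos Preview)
Your dyadic telescoping strategy is natural, but the crucial step where you extract the bound $\|u_{T_{k+1}}-u_{T_k}\|\lesssim T_k^{\alpha}M$ does not go through in the way you describe. The kernel $K_k$ you introduce, solving $(\partial_t-\Delta)K_k=\Psi_{T_{k+1}}-\Psi_{T_k}$, is \emph{not} compactly supported: the heat equation has infinite speed of propagation, so $K_k=p\ast(\Psi_{T_{k+1}}-\Psi_{T_k})$ (with $p$ the heat kernel) lives on the whole forward half-space. Worse, a scaling computation shows $K_k(z)=T_k^{-d}\Phi(t/T_k^2,x/T_k)$ for a fixed profile $\Phi$, and $\Phi$ behaves for large $\tau$ like a first derivative of the heat kernel, so that $\int_{\R^{d+1}}|\Phi|=\infty$. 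Hence $\|K_k\|_{L^1}$ is not $O(T_k^{2})$ as you need. Your ``cleaner alternative route'' has the same defect from a different side: the increment $u_{T_{k+1}}-u_{T_k}$ is supported on a set of parabolic size $O(1)$ (inherited from the support of $u$), not $O(T_k)$, so rescaling the smooth parabolic estimate on the equation $(\partial_t-\Delta)(u_{T_{k+1}}-u_{T_k})=f_{T_{k+1}}-f_{T_k}$ only yields $\|u_{T_{k+1}}-u_{T_k}\|\lesssim 1\cdot T_k^{\alpha-2}M$, which is off by a factor $T_k^{2}$ and does not sum. A Littlewood--Paley argument of this type \emph{can} be made to work if the mollifier is tied to the operator (e.g.\ the heat semigroup itself, which has the semigroup factorisation $\Psi_T=\Psi_{T/2}\ast\Psi_{T/2}$ allowing you to peel off a $\Psi_S$ and hit $f$ with it), but for a generic compactly supported $\Psi$ as fixed in the paper this structure is absent.

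The paper instead runs a Campanato-type iteration with absorption. One fixes $T=\epsilon R=\epsilon^{2}L$, splits $u_T$ on $B(z_0,L)$ into a caloric part $v_<$ and a particular solution $v_>$ with zero parabolic boundary data and source $f_T$; the maximum principle controls $v_>$ by $L^{2}\|f_T\|\leq L^{2}T^{\alpha-2}N$, while interior estimates for caloric functions give $\inf_l\|v_<-l\|_{B(z_0,R)}\lesssim(R/L)^{2}\inf_l\|u_T-l\|_{B(z_0,L)}$. Converting between $u$ and $u_T$ costs $T^{\alpha}[u]_{\alpha}$ on each side, and the Campanato equivalence $[u]_{\alpha}\sim\sup_{z_0,L}L^{-\alpha}\inf_l\|u-l\|_{B(z_0,L)}$ then gives an inequality of the form $[u]_{\alpha}\lesssim(\epsilon^{2-\alpha}+\epsilon^{\alpha})[u]_{\alpha}+\epsilon^{\alpha-4}N$, from which one absorbs for small $\epsilon$. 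The gain of two powers that you were missing comes here from the factor $L^{2}$ in the bound on $v_>$ together with the local decomposition, rather than from an $L^{1}$ bound on a global kernel.
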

\begin{proof}
Throughout the proof, $\lesssim$ will denote a bound up to a multiplicative constant, which may change from line to line, but which always depends only on $\alpha$ and $d$.
Define $N=\sup_{T\leqslant 1}T^{2-\alpha}\|f_T\|$. Since $(\cdot)_T$ denotes the convolution with a smooth kernel, it commutes with derivatives. We know that for $T<1$, for any $l\in \text{span}\{1,x_i,i\in\{1,...,d\}\}$, we have on $\mathbb{R}\times\mathbb{R}^d$,
\[(\partial_t-\Delta)(u_T-l)=f_T.
\]
For $z_0\in B(0,1)$, for some $L>0$ to be fixed below, define $v_>$ as the solution to   
\[(\partial_t-\Delta)v_>=\ind{B(z_0,L)}f_T,\quad v_>|_{\partial B(z_0,L)}=0,\]
where $\partial B(z_0,L)=\{z=(t,x),d(z,z_0)=L,t\leqslant t_0\}$ is the parabolic boundary of $B(z_0,L)$.
The first interesting inequality we get from standard heat equation estimates \cite[Cor.8.1.5]{krylov1996lectures} is
\begin{equation}\label{first inequality}
\|v_>\|\lesssim L^2\|f_T\|\leqslant  L^2T^{\alpha-2}N.
\end{equation}
Define $v_<=u_T-v_>$. As $(\partial_t-\Delta)v_<=0$ on $B(z_0,L)$ for any differential operator  $D\in\{\partial_t,\partial_i\partial_j,i,j\in\{1,...,d\}\}$,
\[\|Dv_<\|_{B_{\frac{L}{2}}}\lesssim L^{-2}\inf_l\|u_T-l\|_{B(z_0,L)},\]
where $l$ runs over all function spanned by $1$ and $x_i,i\in\{1,...,d\}$.
Therefore, for any $R<\frac{L}{2}$, for the same range of operator $D$, for a suitably chosen $l_R\in \text{span}\{1,x_i,i\in\{1,...,d\}\}$,
\[\|v_<-l_R\|_{B(z_0,R)}\leqslant R^2\|Dv_<\|_{B(z_0,R)}\lesssim\left(\frac{R}{L}\right)^2\inf_l\|u_T-l\|_{B(z_0,L)}.\]
Using the definition of $v_<$ and the triangle inequality,
\[\|u_T-l_R\|_{B(z_0,R)}-\|v_>\|_{B(z_0,R)}\lesssim\left(\frac{R}{L}\right)^2\inf_l\|u_T-l\|_{B(z_0,L)}.\]
From \eqref{first inequality}, 
\begin{align}\label{second inequality}
\inv{R^\alpha}\|u_T-l_R\|\lesssim& \left(\frac{R}{L}\right)^{2-\alpha}\inv{L^\alpha}\inf_l\|u_T-l\|_{B(z_0,L)}\nonumber\\
&+\left(\frac{L}{T}\right)^2\left(\frac{T}{R}\right)^\alpha N.
\end{align}
Furthermore, from \eqref{mollified regularity} we get,
\begin{equation}\label{third inequality}
\inv{R^\alpha}\|u-l_R\|_{B(z_0,R)}\lesssim \inv{R^\alpha}\|u_T-l_R\|_{B(z_0,R)}+\left(\frac{T}{R}\right)^\alpha[u]_{\alpha}.
\end{equation}
Similarly, for any $l\in \text{span}\{1,x_i,i\in\{1,...,d\}\}$
\begin{equation}\label{third inequality bis}
\inv{L^\alpha}\|u_T-l\|_{B(z_0,L)}\lesssim \inv{L^\alpha}\|u-l\|_{B(z_0,L)}+\left(\frac{T}{L}\right)^\alpha[u]_{\alpha}.
\end{equation}
Hence for $0<\epsilon<1$, for $T=\epsilon R=\epsilon^2L$, \eqref{second inequality} and \eqref{third inequality},\eqref{third inequality bis} give:
\begin{align}\label{fourth inequality}
\inv{R^\alpha}\inf_l\|u-l\|_{B(z_0,R)}\lesssim \epsilon^{2-\alpha}\inv{L^\alpha}\inf_l\|u-l\|_{B(z_0,L)}\\+(\epsilon^\alpha+\epsilon^{2\alpha})[u]_{\alpha}+\epsilon^{\alpha-4}N.
\end{align}
Note that $[u]_\alpha\sim\sup_{z_0}\sup_L\inv{L^\alpha}\inf_l\|u-l\|_{B(z_0,L)}$, hence
\begin{equation}\label{fifth inequality}
[u]_\alpha\lesssim(\epsilon^{2-\alpha}+\epsilon^\alpha+\epsilon^{2\alpha})[u]_\alpha+\epsilon^{\alpha-4}N.
\end{equation}
By making $\epsilon$ small enough, we can absorb $[u]_{\alpha}$ in the \rhs of \eqref{fifth inequality} into the left hand side, concluding the proof of the Schauder estimate \eqref{Schauder lemma}. Note that for this last step we needed the assumption $[u]_\alpha<\infty$. This assumption can be removed as by regularising the equation first, we have that uniformly for any $\tau>0$,
\[[u_\tau]_\alpha\lesssim\sup_{T\leqslant 1}T^{2-\alpha}\|(f_\tau)_T\|\leqslant\sup_{T\leqslant 1}T^{2-\alpha}\|f_T\|,\]
and as $u$ is continuous, we can pass to the limit for $\tau\rightarrow 0$.
\end{proof}

\subsection{Proof of Theorem~\ref{The Theorem}}

From now on, $f(u)=|u|^{m-1}u$. In particular, Theorem~\ref{main thm poly 1} holds with $\Theta^{-1}(R^{-2})=R^{-\frac1{m-1}}$. The proof relies on two arguments. The small scale oscillations are controlled via Schauder theory and the large scale behaviour through the maximum principle derived in section~\ref{sec:Caccio}, which applies only to regular objects. A connection between the two is established via the convolution of the equation with the kernel introduced in Section~\ref{sec:Main}, which produces a commutator term. The technicality of the proof lies in balancing the contribution of the commutator and the contribution of the irregular noise.

Throughout the proof, $\lesssim$ will denote a bound up to a multiplicative constant which may change from line to line, but will only depend on $d$, $m$ and $\alpha$. We will also write $u^m$ as a short-hand for $u|u|^{m-1}$, as in the case when $m$ is an odd integer.

\paragraph*{\textsc{Step} 1: Local Schauder estimate}
We claim that for any $R>0$, for any $k>2$,
\begin{align}\label{schauder}
[u]_{\alpha,B(z,R)}\lesssim &\sup_{T\leqslant kR}T^{2-\alpha}\|\heat (u\ind{B(z,kR)})_T\|+(kR)^{-\alpha}\|u\|_{ B(z,kR)}\nonumber\\
\leqslant&(kR)^{2-\alpha}\|u\|^m_{ B(z,kR)}+(kR)^{2-\alpha}\|g\| +[\zeta]_{\alpha-2,B(z,kR)}\nonumber\\
&+(kR)^{-\alpha}\|u\|_{ B(z,kR)}.
\end{align}
We prove this Schauder estimate by applying some cut-off functions and using the Lemma~\ref{lemshauder}.
By scaling and translation, it is enough to prove for some $C^\alpha$ function $U$,
\begin{align}\label{schauder scaled}
[U]_{\alpha,B(0,\frac12)}\lesssim &\sup_{T\leqslant1}T^{2-\alpha}\|\heat (U\ind{B(0,1)})_T\|+\|U\|_{ B(0,1)}.
\end{align}
Indeed, since $[u]_{\alpha,B(0,\frac1k)}\leqslant[u]_{\alpha,B(0,\frac12)}$, if we have \eqref{schauder scaled}, define $U(t,x)=u((kR)^2(t-t_0),kR(x-x_0))$. Then 
\[
\|u\|_{ B(z,kR)}=\|U\|_{ B(0,1)},\quad [u]_{\alpha,B(z,R)}=(kR)^\alpha[U]_{\alpha,B(0,\frac1k)}
\]
\[
\sup_{T\leqslant kR}T^{2-\alpha}\|\heat (u\ind{B(z,kR)})_T\|=\sup_{T\leqslant1}T^{2-\alpha}\|\heat (U\ind{B(0,1)})_T\|.
\] 
We proceed to prove \eqref{schauder scaled}.
Let $\eta$ be a cut-off function, with value $1$ on $B(0,\frac12)$ and $0$ on $B(0,1)^C$, and such that $\|\triangledown\eta\| \leqslant 4$ and $\|(\Delta+\partial_t)\eta\| \leqslant 4$. Then
\begin{equation}\label{cut off e}
(\partial_t-\Delta)U\eta=\eta\heat U+U(\partial_t+\Delta)\eta-2\triangledown.(U\triangledown\eta ).
\end{equation}
By applying Lemma~\ref{lemshauder} to Equation \eqref{cut off e} we get that:
\begin{equation}\label{lemma 9}
[U\eta]_{\alpha}\lesssim\sup_{0<T<1}T^{2-\alpha}\|(\eta\heat U+U(\partial_t+\Delta)\eta-2\triangledown.(U\triangledown\eta ))_T\| .
\end{equation}
We apply the triangle inequality and make use of \eqref{e:etazeta} to bound each of these terms as follows.
\begin{align*}
\|(\eta\heat U)_T\|\lesssim \|\heat( U\ind{(B(0,1)})_T\|,
\end{align*}
\begin{align*}
\|(U(\partial_t+\Delta)\eta)_T\|\leqslant\|U(\partial_t+\Delta)\eta\|\lesssim \|U\|_{B(0,1)},
\end{align*}
\begin{align*}
\|(\triangledown.(U\triangledown\eta))_T\| =&\sup_z\int(U\triangledown\eta)(z-\bar{z}).\triangledown \Psi_T(\bar{z})d\bar{z}\\
\leqslant &\|U\triangledown\eta\| \|\triangledown \Psi_T\|_{L^1}\\
\lesssim &\inv{T}\|U\|_{ B(0,1)}.
\end{align*}
Since $\alpha<1$, we have
\begin{align*}
\sup_{0<T<1}T^{2-\alpha}\|(\eta\heat U+U(\partial_t+\Delta)\eta-2\triangledown.(U\triangledown\eta ))_T\|\\
\lesssim \|\heat( U\ind{(B(0,1)})_T\|+\|U\|_{ B(0,1)}.
\end{align*}
This concludes the proof of \eqref{schauder scaled}, hence the proof of \eqref{schauder}.

\bigskip

\paragraph*{\textsc{Step} 2: Application of Maximum principle}
We convolve the equation \eqref{phi41} with $\Psi_T$, where $T\in (0,1)$ will be specified later:
\begin{equation}\label{phi41_T}
(\partial_t-\Delta)u_T=-(u_T)^m+g_T+\zeta_T+\left((u_T)^m-(u^m)_T\right).
\end{equation}
 Theorem~\ref{main thm poly} implies that for all $0<R'<R<\inv{2}$,

\begin{align}\label{apply caccio}
\|u_T\|_{P_R}\lesssim\max\Big{\{}&\left(\frac1{(R-R')^2}\right)^\inv{m-1},\|g\| ^\frac1m,\|\zeta_T\|_{P_{R'}}^\inv{m},\nonumber\\
&\left(\|(u_T)^m-(u^m)_T\|_{P_{R'}}\right)^\inv{m}\Big{\}}.
\end{align}
The goal is now to balance the commutator and the term with the noise. This will be done by choosing the parameter $T$ appropriately.

\bigskip

\paragraph*{\textsc{Step} 3: Bounds on the commutator}
 We need estimates on the commutator $(u_T)^m-(u^m)_T$. This is obtained as $u$ is $C^\alpha$, using the moment bounds \eqref{moment of psi} and \eqref{mollified regularity}.
\begin{align*}
((u&_T)^m-(u^m)_T)(z)=\int\Psi_T(z-\bar{z})\left(u_T(z)^m-u(\bar{z})^m\right)d\bar{z}\\
=&\int\Psi_T(z-\bar{z})\int_0^1\left((u)_T(z)-u(\bar{z})\right)m\left(\lambda(u)_T(z)+(1-\lambda)u(\bar{z})\right)^{m-1}d\lambda d\bar{z}\\
\leqslant& m\|u\|_{B(z,T)}^{m-1}\int\Psi_T(z-\bar{z})\left(u_T(z)-u(z)+u(z)-u(\bar{z})\right)d\bar{z}\\
\leqslant& m\|u\|_{B(z,T)}^{m-1}\int\Psi_T(z-\bar{z})\left(T^\alpha[u]_{\alpha,B(z,T)}+[u]_{\alpha,B(z,T)}d(z,\bar{z})^\alpha\right)d\bar{z}\\
\leqslant& 2 m\|u\|_{B(z,T)}^{m-1}T^\alpha[u]_{\alpha,B(z,T)}.
\end{align*}
Since this is true for all $z\in P_R$,
\begin{equation}\label{commutator estimate 1}
\|(u_T)^m-(u^m)_T\|_{P_R}\leqslant 2m\|u\|_{P_{R-T}}^{m-1}\sup_{z\in P_R}[u]_{\alpha,B(z,T)}T^\alpha.
\end{equation}
Using the local Schauder estimate \eqref{schauder} gives, for any $k>2$:
\begin{align}\label{commutator estimate 2}
\|(u_T)^m-(u^m)_T\|&_{P_R}\lesssim T^2k^{2-\alpha}(\|u\|_{P_{R-kT}}^{2m-1}+\|u\|_{P_{R-kT}}^{m-1}\|g\| )\nonumber\\
&+\|u\|_{P_{R-T}}^{m-1}\sup_{z\in P_R}[\zeta]_{\alpha-2,B(z,kR)}T^\alpha+k^{-\alpha}\|u\|_{P_{R-kT}}^m.
\end{align}

\bigskip

\paragraph*{\textsc{Step} 4: Boot-strapping}
We show here that for $k,T$ such that $2(k+1)T\leqslant1$, with as before $k>2$, and for $1\geqslant R\geqslant 2(k+1)T$, we have
\begin{align}\label{big step 4}
\|u\|_{P_{R}}\lesssim\max\Big{\{}&R^\frac2{1-m},\|g\| ^\frac1m,\left([\zeta]_{\alpha-2,P_0} T^{\alpha-2}\right)^\frac1m,(T^2k^{2-\alpha})^\frac{1}{m}\|u\|_{P_0}^{2-\frac{1}{m}},\nonumber\\
 &\left(T^2k^{2-\alpha}\right)^\frac1m\|u\|_{P_0}^{1-\frac1m}\|g\| ^\frac1m,\left(\|u\|_{P_0}^{m-1}[\zeta]_{\alpha-2,P_0}T^\alpha\right)^\frac1m,k^{-\frac{\alpha}m}\|u\|_{P_0}\nonumber\\
& T^2 k^{2-\alpha}\|u\|^m_{P_0},T^2 k^{2-\alpha}\|g\| ,T^\alpha[\zeta]_{\alpha-2,P_0},k^{-\alpha}\|u\|_{P_0},\Big{\}}.
\end{align}
 We need to be careful with the sets that are concerned by the norms since our different estimates always require a bit more space. We use the bound \eqref{mollified regularity} and the Schauder estimate \eqref{schauder}:
\begin{align}\label{big step 3b}
\|u\|_{P_{R}}\leqslant&\|u_T\|_{P_{R}}+T^\alpha\sup_{z\in P_R}[u]_{\alpha,B(z,T)} 
\\\leqslant&\|u_T\|_{P_{R+T}}+T^2 k^{2-\alpha}(\|u\|^m_{P_{R-kT}}+\|g\| )\nonumber\\
&+T^\alpha[\zeta]_{\alpha-2,P_{R-kT}}+k^{-\alpha}\|u\|_{P_{R-kT}}.
\end{align}

Defining $r=kT$ allows to apply the bounds \eqref{apply caccio},\eqref{commutator estimate 2} and \eqref{shauder ou}. As $r\geqslant0$, $P_{r}\subset P_0$. 
\begin{align}\label{big step 3a}
\|u_T\|_{P_{R-T}}\lesssim& \max\Big{\{}(R-(k+1)T)^\frac2{1-m},\|g\| ^\frac1m,\|\zeta_T\|_{P_{r}}^\frac1m, \|(u_T)^m-(u^m)_T\|_{P_{r}}^\inv{m}\Big{\}}\nonumber\\
\lesssim \max\Big{\{}&(R-(k+1)T)^\frac2{1-m},\|g\| ^\frac1m,\left([\zeta]_{\alpha-2,P_0} T^{\alpha-2}\right)^\frac1m,(T^2k^{2-\alpha})^\frac1m\|u\|_{P_0}^{2-\frac1m},\nonumber\\
  &\left(T^2k^{2-\alpha}\right)^\frac1m\|u\|_{P_0}^{1-\frac1m}\|g\| ^\frac1m,\left(\|u\|_{P_0}^{m-1}[\zeta]_{\alpha-2,P_0}T^\alpha\right)^\frac1m,k^{-\frac{\alpha}{m}}\|u\|_{P_0}\Big{\}}.
\end{align}
If we start with  $R\geqslant 2(k+1)T$ then $R-(k+1)T\geqslant\frac{R}2$. Putting together \eqref{big step 3a} and \eqref{big step 3b} gives \eqref{big step 4}.

\bigskip

\paragraph*{\textsc{Step} 5: Choosing $T$}
In order to balance the term containing $\zeta$ in \eqref{big step 4}, we see that we should assign the value $T=\frac{\mu}{\|u\|_{P_0}^\frac{m-1}{2}}$ for some $\mu\in(0,1)$ to be chosen. 
Note also that as $\mu\in(0,1)$, $(\mu^{\alpha-2}\vee\mu^\alpha)=\mu^{\alpha-2}$. Furthermore, we impose $\mu^2k^{2-\alpha}\leqslant 1$. Consequently, \eqref{big step 4} becomes
 \begin{align}\label{BIG STEP}
\|u\|_{P_{R}}\lesssim\max&\Big{\{}R^\frac2{1-m},(1+(\mu^2k^{2-\alpha})^\frac1m)\|g\| ^\frac1m,\left(\mu^{\alpha-2}\|u\|^{(m-1)\frac{2-\alpha}2}_{P_0}[\zeta]_{\alpha-2,P_0}\right)^\frac1m,\nonumber\\
&((\mu^2k^{2-\alpha}\vee  k^{-\alpha})^\inv{m}+(\mu^2k^{2-\alpha}\vee k^{-\alpha}))\|u\|_{P_0},\nonumber\\
&\mu^2k^{2-\alpha}\|u\|_{P_0}^{1-m}\|g\|,\frac{\mu^\alpha}{\|u\|_{P_0}^{(m-1)\frac{\alpha}2}}[\zeta]_{\alpha-2,P_0} \Big{\}}.
\end{align}

\bigskip

\paragraph*{\textsc{Step} 6: Identification of terms}

We claim that the bound above implies that there exists a positive constants $C$ such that:
 \begin{equation}\label{Temporary conclusion}
\|u\|_{P_{R}}\leqslant C\max\Big{\{}R^\frac{2}{1-m},\|g\| ^\frac1m,[\zeta]_{\alpha-2,P_0}^\inv{1+(m-1)\frac\alpha2},\inv{2C}\|u\|_{P_0}\Big{\}}.
\end{equation}

 We need to interpolate some of the arguments of the maximum in \eqref{BIG STEP} with arguments of our goal \eqref{Temporary conclusion}. The first two terms are already in the right form. For the next one, a simple interpolation inequality gives that for any $\gamma>0$,
\[
\left(\mu^{\alpha-2}\|u\|^{(m-1)\frac{2-\alpha}2}_{P_0}[\zeta]_{\alpha-2,P_0}\right)^\frac1m\lesssim\mu^{\frac{\alpha-2}m}\max\Big{\{}\gamma\|u\|_{P_0},\gamma^\frac{\alpha-2}{\frac2{m-1}+\alpha}[\zeta]_{\alpha-2,P_0}^\inv{1+(m-1)\frac\alpha2}\Big{\}}.
\]
The next term is also in the right form, provided one chooses first $k$ large, and then $\mu$ small. The last two terms can not be dealt with with classical interpolation, since they involve negative powers of $\|u\|_{P_0}$.
 For the first one, we state that always one of the following is true, for any $\gamma>0$:
\[\|u\|^{-(m-1)}_{P_0}\|g\| \leqslant\gamma\|u\|_{P_0}\text{ or }\|u\|_{P_0}^{m}\leqslant\frac1\gamma\|g\| .\]
The first case gives the last argument of our objective for $\gamma$ small enough. The second case gives $\|u\|_{P_{R}}\leqslant\|u\|_{P_0}\leqslant(\frac1\gamma\|g\|)^\frac1m$. We proceed similarly for the last term. One of the following is always true:
\[\mu^\alpha\|u\|^{-(m-1)\frac{\alpha}{2}}_{P_0}[\zeta]_{\alpha-2,P_0}\leqslant\mu^\alpha\gamma\|u\|_{P_0}\text{ or }\|u\|_{P_0}^{1+(m-1)\frac{\alpha}{2}}\leqslant\inv{\gamma}[\zeta]_{\alpha-2,P_0}.\]
Once again the first case gives the last argument of our objective for $\mu^\alpha\gamma$ small enough, and the second case gives $\|u\|_{P_{R}}\leqslant\|u\|_{P_0}\leqslant(\frac1\gamma[\zeta]_{\alpha-2,P_0})^\inv{1+(m-1)\frac\alpha2}$.
 We can then choose $k$ large, $\mu$ and $\gamma$ small to get the desired constant $C$.
\bigskip

\paragraph*{\textsc{Step} 7: Iterating the result}
The last argument of the maximum \eqref{Temporary conclusion} is greater than the first one for all $R$ such that
 \[R\leqslant R_1:=\left(\inv{2C}\|u\|_{P_0}\right)^\frac{1-m}{2},\]

Let us check that this is not in contradiction with $R_1\geqslant 2(k+1)T$. By defintion of $T$ and $R_1$, 
\[ 2(k+1)T=2(k+1)\frac{\mu}{\|u\|_{P_0}^\frac{m-1}{2}}=2(k+1)\mu(2C)^\frac{1-m}{2}R_1\leqslant R_1\Leftrightarrow 2(k+1)\mu(2C)^\frac{1-m}{2}\leqslant 1.\]
Since $C>1$ and $m>1$, it is enough to have  $2(k+1)\mu\leqslant 1$. This can be done since $\mu$ is chosen after $k$.

From this point, the result \eqref{Temporary conclusion} can be iterated to get bounds for smaller and smaller parabolic boxes.
\[\|u\|_{P_{(R+R_{n-1})}}\leqslant C\max\Big{\{}\inv{2C}\|u\|_{P_{R_{n-1}}},\|g\| ^\frac1m,[\zeta]_{\alpha-2,P_0}^\inv{1+(m-1)\frac{\alpha}{2}},\inv{R^\frac{2}{m-1}}\Big{\}}.\]
Define $R_n$ recursively by
\begin{equation}\label{defrn}
R_n-R_{n-1}=\left(\inv{2C}\|u\|_{P_{R_{n-1}}}\right)^\frac{1-m}2=\left(\inv{2C}\frac{\|u\|_{P_0}}{2^{n-1}}\right)^\frac{1-m}2.
\end{equation}

We conclude by summing those increments:
\begin{align}\label{studyR}
R_n&=\sum_{k=1}^{n}R_k-R_{k-1}=\sum_{k=1}^{n}\left(\inv{2C}\frac{\|u\|_{P_0}}{2^{k-1}}\right)^\frac{1-m}2.\nonumber\\
=&\left(\frac{\|u\|_{P_0}}{2C}\right)^\frac{1-m}2\sum_{k=0}^{n-1}(2^\frac{1-m)}2)^k\lesssim\left(\frac{\|u\|_{P_0}}{2C}\right)^\frac{1-m}2
\end{align}
The same arguments as in the proof of Lemma~\ref{lem:ODE} conclude the proof of Theorem~\ref{The Theorem}.

\section{Mutiplicative noise}\label{sec:multipliative}
We present one example of equation where our result applies. Let $(\Omega,\mathcal{F},\mathcal{F}_t,\mathbb{P})$ be a filtered probability space and let 
$(W(t,\eta), t \geq 0, \eta \in C^{\infty}_0(\R^d))$ be a Brownian motion
with spatial covariance operator $K$ on $\Omega$. We assume that $K$ is given by the convolution with a function with controlled blow-up near the origin, i.e. 

\begin{equation}\label{K-def}
K \phi (x) = \int_{\R^d} K(x-x') \phi(x') dx',
\end{equation}
for $K \in C^{\infty}(\R^d \setminus \{ 0 \})$ satisfying 
\begin{align}
|K(x)| \leq \frac{1}{|x|^{\lambda}},
\end{align}
for some $\lambda <2$. If $\lambda >1$ and $d=1$, we allow additionally for a Dirac mass in the origin, in which case \eqref{K-def} turns into 
\begin{align}\label{K-def-bis}
K \phi (x) = \int_{\R} K(x-x') \phi(x') dx' + \phi(x).
\end{align}
In other words $(W(t,\eta), t \geq 0, \eta \in C^{\infty}_0(\R^d))$ is a centred Gaussian process with covariances given 
either by 
\[
\E W(t, \phi) W(t', \phi') = (t \wedge t' ) \int_{\R^d}\int_{\R^d} \phi(x) K(x-x') \phi' (x')  dx dx' 
\]
or in the one-dimensional case 
\begin{align*}
&\E W(t, \phi) W(t', \phi') \\
& \quad = (t \wedge t' )  \Big[ \int_{\R^d}\int_{\R^d} \phi(x) K(x-x') \phi' (x')  dx dx'  + \int_{\R^d} \phi(x) \phi'(x) dx \Big].
\end{align*}
Let $(\sigma(t,x), t \geq 0, x \in \R^d)$ be a progressively measurable process, 
with a deterministic $L^\infty$ bound, without loss of generality $|\sigma(t,x)| \leq 1$. 
Let $u(t,x)$ be a continuous process which satisfies the SPDE 
\begin{equation}\label{phi multnoise}
du=(\Delta u-f(u)+g(u))dt+\sigma dW
\end{equation} 
on $P_0$, with $f$ satisfying the Assumptions~\ref{ass2}. More precisely, for all $\eta \in C^{\infty}(\R \times \R^d)$ compactly supported in $P_0$ we assume that the following holds almost surely:
\begin{align}\notag
 &\int \int u (- \partial_t -\Delta) \eta dt dx \\
 \label{phi_multnoisebis}
 & \qquad = \int \int (- f(u,z) + g(u,z))  \eta dt dx+ \int \int \eta(x)  \sigma(t,x) dx  dW(t,x),
\end{align}
where $ \int \int \eta(x)  \sigma(t,x) dx  dW(t,x)$ should be interpreted as a stochastic integral, as defined in \cite[Chapter 4]{da2014stochastic}.
The following Lemma shows that the results of our deterministic analysis are applicable to this stochastic case.

The previous results do not depend on the particular choice of convolution kernel $\Psi$. We apply it with $\tilde{\Psi}$ defined as
\begin{equation}\label{def kernel}
\tilde{\Psi}=\Psi_\frac12\ast\Psi_\frac12,
\end{equation}
where $\Psi$ is as defined in Section \ref{sec:Main}. It is clear that  $\tilde{\Psi}$ is still non-negative, smooth and compactly supported in $B(0,1)$.
We still write $(\cdot)_T$ for the convolution with $\Psi_T$ but we define the $C^{\alpha-2}$ norm with respect to $\tilde{\Psi}$
\[
[\zeta]_{\alpha-2,C}=\sup_{0<T\leqslant 1}\|(\zeta_\frac{T}2)_\frac{T}2\|_CT^{2-\alpha}
\]
%
\begin{lem}\label{lem mult}
We define a family of random variables $(\zeta(\eta), \eta \in C^{\infty}_0(\R \times \R^d))$ by
\[
\zeta(\eta) = \int \int \eta(x)  \sigma(t,x) dx  dW(t,x).
\]
Then there exists a random distribution $\tilde{\zeta}$ on $\Omega$ which almost surely  takes values in  $C^{\alpha-2}$ for any $\alpha < \frac{2-\lambda}{2}$
and such that for $\eps>0$ small enough
\begin{equation}\label{multnoise bound}
\Exp{\exp \Big(\epsilon [\tilde\zeta]_{\alpha-2,P_0}^2\Big)} <
\infty.
\end{equation}
Furthermore $\tilde{\zeta}$ is a modification of $\zeta$ in the sense that for all $\eta \in C^{\infty}_0(\R \times \R^d))$ we have almost surely
\[
\tilde\zeta(\eta) = \zeta(\eta).
\]
\end{lem}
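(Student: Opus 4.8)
The plan is to define $\tilde\zeta$ through a Kolmogorov-type argument applied to the mollified objects $(\zeta_{T/2})_{T/2}$, leveraging that $\zeta$ is (conditionally on $\sigma$) built from a Gaussian stochastic integral, so all moments are controlled by the $L^2$ norm. First I would fix $T\in(0,1]$ and a point $z_0=(t_0,x_0)\in P_0$, and write $(\tilde\Psi_T\ast\zeta)(z_0)=\zeta(\tilde\Psi_T(z_0-\cdot))$, which by definition of $\zeta$ equals the stochastic integral $\int\int \tilde\Psi_T(z_0-(t,x))\,\sigma(t,x)\,dx\,dW(t,x)$. Because $|\sigma|\le 1$ and by the Itô isometry for the covariance operator $K$, the conditional variance is bounded by
\[
\E\big[ (\tilde\Psi_T\ast\zeta)(z_0)^2\big] \le \int_0^{t_0}\!\!\int\!\!\int \tilde\Psi_T(z_0-(t,x))\,|K(x-x')|\,\tilde\Psi_T(z_0-(t,x'))\,dx\,dx'\,dt,
\]
(plus the extra diagonal term $\int_0^{t_0}\!\int \tilde\Psi_T(z_0-(t,x))^2\,dx\,dt$ in the one-dimensional Dirac case). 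Using $|K(x)|\le |x|^{-\lambda}$ and the scaling $\tilde\Psi_T(x,t)=T^{-(d+2)}\tilde\Psi(x/T,t/T^2)$, a change of variables shows this is $\lesssim T^{-(2-(2-\lambda))} = T^{-\lambda}$; more precisely one gets $\E[(\tilde\Psi_T\ast\zeta)(z_0)^2]\lesssim T^{2\alpha-4}\cdot T^{4-2\alpha-\lambda}$, i.e. the correct exponent so that $T^{2-\alpha}\|\zeta_{T/2,T/2}\|$ has bounded second moment precisely when $\alpha<(2-\lambda)/2$. Since the integrand is (conditionally) Gaussian, hypercontractivity gives $\E[|(\tilde\Psi_T\ast\zeta)(z_0)|^{2p}]\lesssim_p T^{(2\alpha-4)p}$ for every $p$, and the same bound holds for increments $(\tilde\Psi_T\ast\zeta)(z_0)-(\tilde\Psi_{T}\ast\zeta)(z_1)$ with an extra factor $(d(z_0,z_1)/T)^{2\gamma p}$ for some $\gamma>0$, again by estimating the $L^2$ norm of the difference of kernels.

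Next I would run the Besov/Kolmogorov criterion in the form used for negative Hölder spaces: choosing $T$ dyadic, $T=2^{-n}$, and applying the moment bounds above for $p$ large enough, a Borel–Cantelli / chaining argument (e.g. the wavelet or the Garsia–Rodemich–Rumsey type characterisation in \cite[Section 2.4]{bahouri2011fourier}, or the elementary argument reproving equivalence of kernel-based norms from \cite{OW}) produces an almost surely finite random variable equal to $\sup_{T\le 1}T^{2-\alpha}\|(\zeta_{T/2})_{T/2}\|_{P_0}$ and a distribution $\tilde\zeta\in C^{\alpha-2}$ realising it. The continuity of $z\mapsto (\tilde\Psi_T\ast\zeta)(z)$ for fixed $T$ (clear since $\tilde\Psi_T$ is smooth and the stochastic integral depends continuously on the test function) lets one take a jointly measurable modification, and then the supremum over the countable dyadic set of $T$ and over $P_0$ agrees with the supremum over all $T\le 1$ by the smoothing property $\tilde\Psi_T=\Psi_{T/2}\ast\Psi_{T/2}$, which is exactly why the kernel \eqref{def kernel} was chosen. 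For the Gaussian integrability \eqref{multnoise bound}, I would note that conditionally on $\mathcal F^\sigma$ the field $(\tilde\Psi_T\ast\zeta)(z)$ is a centred Gaussian field whose covariance is bounded \emph{uniformly in $\sigma$} by the deterministic bound $|\sigma|\le 1$; hence the Borell–TIS inequality (or Fernique's theorem) applied conditionally gives $\E[\exp(\eps[\tilde\zeta]_{\alpha-2,P_0}^2)\mid\mathcal F^\sigma]<\infty$ with a bound independent of $\sigma$, and then integrating out $\sigma$ yields \eqref{multnoise bound}.

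Finally, the statement that $\tilde\zeta$ is a modification of $\zeta$, i.e. $\tilde\zeta(\eta)=\zeta(\eta)$ a.s. for each fixed $\eta\in C^\infty_0$, follows because both sides are limits of $(\tilde\Psi_T\ast\zeta)$ tested against $\eta$ as $T\to 0$: for the left side by construction of $\tilde\zeta$ as the $C^{\alpha-2}$ limit, and for the right side because $\tilde\Psi_T\ast\eta\to\eta$ in $C^\infty_0$ and $\zeta$ is linear and $L^2$-continuous in its argument, so $\zeta(\tilde\Psi_T\ast\eta)\to\zeta(\eta)$ in $L^2(\Omega)$. The main obstacle I anticipate is bookkeeping the scaling in the variance estimate carefully enough to land exactly on the threshold $\alpha<(2-\lambda)/2$ — in particular handling the time integral (which contributes a factor $T^2$) together with the spatial singularity $|x|^{-\lambda}$ (which contributes $T^{d-\lambda}$ against the $T^{-2d}$ from two copies of the spatial kernel normalisation), and in the one-dimensional case separately checking that the Dirac contribution $\int_0^{t_0}\!\int\tilde\Psi_T^2$ scales like $T^{2}\cdot T^{-2}=T^0$ times $T^{-2}$... i.e. that it is no worse than the $\lambda\in(1,2)$ bulk term, which is exactly the restriction ``$\lambda>1$ and $d=1$'' under which the Dirac mass is allowed.
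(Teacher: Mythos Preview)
Your overall strategy---moment bounds on the mollified field, then a Kolmogorov/Besov criterion over dyadic scales---is the same as the paper's, and your variance computation and scaling discussion are essentially right. But there is a genuine gap in how you pass from second moments to higher moments and to exponential integrability.

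You write that ``conditionally on $\mathcal F^\sigma$ the field $(\tilde\Psi_T\ast\zeta)(z)$ is a centred Gaussian field'' and then invoke hypercontractivity and Borell--TIS/Fernique. This is not justified: $\sigma$ is only assumed to be \emph{progressively measurable}, so it may depend on the driving noise $W$ itself. In that case the stochastic integral $\int\sigma\,dW$ is \emph{not} Gaussian conditionally on $\sigma$, and neither Gaussian hypercontractivity nor the Borell--TIS inequality applies. (Think of $\sigma$ being a sign that flips according to the past of $W$.) The deterministic bound $|\sigma|\le 1$ controls the quadratic variation, not the law.

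The paper handles exactly this point by using the Burkholder--Davis--Gundy inequality instead of Gaussian tools: for any continuous martingale $M$, $\E[|M_t|^{2p}]\lesssim p^{p}\,\E[\langle M\rangle_t^{p}]$, and the bracket of $\int\tilde\Psi_T\,\sigma\,dW$ is bounded \emph{deterministically} by the same kernel integral you computed (because $|\sigma|\le 1$). This gives $\E[\zeta_T(z)^{2p}]\lesssim p^{p}T^{-p(\lambda+2)}$, i.e.\ the same growth in $p$ as Gaussian hypercontractivity would give, and then summing the exponential series (after passing to dyadic $T$ via Lemma~\ref{lem mult noise} and using Young's inequality $\|(\zeta_T)_T\|_\infty\le\|\zeta_T\|_{L^{2p}}\|\Psi_T\|_{L^{(2p)'}}$ to turn the spatial supremum into an $L^{2p}$ norm) yields \eqref{multnoise bound}. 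If you replace your conditional-Gaussian step by BDG, the rest of your outline goes through.
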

The following corollaries are consequences of Lemma~\ref{lem mult}. Using first Corollary~\ref{cor:dpd}, as well as Lemma \ref{lemshauder}
which provides bound on the $\alpha$ H\"older semi-norm of $w$ in terms of $[\zeta]_{\alpha-2}$, which in turn controls the 
supremum norm using the Dirichlet boundary conditions, we get:
\begin{cor}\label{SPDE_moment_bound_gen}
Let $u$ solve the SPDE \eqref{phi multnoise} in the sense of \eqref{phi_multnoisebis} for $f$ and $g$ satisfying Assumption~\ref{ass2}. Define $\Theta(u)=\frac{f_1(u)}{u}$. Then 
there exists $\eps_0 = \eps_0( c,d,\alpha)>0$ such that
for $0< \eps \leqslant \eps_0$,
\begin{align*}
\E \bigg[\exp \bigg(\eps \Big(\sup_{0< R \leq \frac12} \frac{ \| u \|_{P_R}}{\Theta^{-1}((\lambda R)^{-2})} \Big)^2 \bigg)  \bigg] < \infty.
\end{align*}
\end{cor}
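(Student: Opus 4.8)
The plan is to obtain the estimate pathwise from Corollary~\ref{cor:dpd} and then integrate it against the Gaussian-type bound of Lemma~\ref{lem mult}. First I would invoke Lemma~\ref{lem mult} to replace $\zeta$ by its modification $\tilde\zeta$, which almost surely takes values in $C^{\alpha-2}$; then $u$ solves \eqref{phi41} with $\zeta=\tilde\zeta$, the linear solution $w$ of \eqref{eq:linear} with Dirichlet data on $P_0$ is well defined, and Corollary~\ref{cor:dpd} applies almost surely.

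The next step is to turn Corollary~\ref{cor:dpd} into a bound on the supremum over $R$. Since $\Theta$ is increasing (Assumption~\ref{ass2}(5)) and $R\mapsto(\lambda R)^{-2}$ is decreasing, the map $R\mapsto\Theta^{-1}((\lambda R)^{-2})$ is decreasing on $(0,\tfrac12]$ and hence bounded below by the strictly positive constant $\theta_0:=\Theta^{-1}(4/\lambda^2)$. Dividing the estimate of Corollary~\ref{cor:dpd} by $\Theta^{-1}((\lambda R)^{-2})$ and taking the supremum over $R\in(0,\tfrac12]$ then yields
\[
\sup_{0<R\leq\frac12}\frac{\|u\|_{P_R}}{\Theta^{-1}((\lambda R)^{-2})}\leq C\max\Big\{1,\frac{f^{-1}(2\|g\|)}{\theta_0},\frac{\|w\|}{\theta_0}\Big\}\leq D_1+D_2\|w\|,
\]
where $D_1$ is a deterministic constant depending on $c,d,\alpha$ and $\|g\|$, while $D_2$ depends only on $c,d,\alpha$; this careful splitting is precisely what keeps the final $\eps_0$ independent of $\|g\|$, since $\|g\|$ enters only the finite prefactor below.

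It then remains to bound $\|w\|$ by the negative-regularity norm of $\tilde\zeta$. Because $w$ satisfies Dirichlet conditions on $P_0$ it vanishes on the parabolic boundary of $P_0$, so writing $|w(z)|=|w(z)-w(\bar z)|\leq[w]_{\alpha,P_0}\,d(z,\bar z)^\alpha$ for a nearby boundary point $\bar z$ and using that $P_0$ has parabolic diameter of order one gives $\|w\|\lesssim[w]_{\alpha,P_0}$. I would then apply Lemma~\ref{lemshauder} to $w$ multiplied by a smooth cut-off supported slightly inside $P_0$, absorbing the resulting lower-order commutator terms exactly as in Step~1 of the proof of Theorem~\ref{The Theorem}, and using the equivalence of the $C^{\alpha-2}$ norms defined through $\Psi$ and through the kernel $\tilde\Psi$ of \eqref{def kernel}; this gives $[w]_{\alpha,P_0}\lesssim[\tilde\zeta]_{\alpha-2,P_0}$, hence $\|w\|\lesssim[\tilde\zeta]_{\alpha-2,P_0}$.

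Combining the previous two steps and squaring, I would obtain deterministic constants $A$ and $B$, with $B$ depending only on $c,d,\alpha$ (and the fixed non-linearity), such that almost surely
\[
\Big(\sup_{0<R\leq\frac12}\frac{\|u\|_{P_R}}{\Theta^{-1}((\lambda R)^{-2})}\Big)^2\leq A+B\,[\tilde\zeta]_{\alpha-2,P_0}^2,
\]
so that $\exp\big(\eps(\sup_R\cdots)^2\big)\leq e^{\eps A}\exp\big(\eps B\,[\tilde\zeta]_{\alpha-2,P_0}^2\big)$. By Lemma~\ref{lem mult} there is $\eps_1>0$ with $\E\big[\exp(\eps_1[\tilde\zeta]_{\alpha-2,P_0}^2)\big]<\infty$, and setting $\eps_0:=\eps_1/B$ concludes the argument for every $0<\eps\leq\eps_0$. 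The only step going beyond bookkeeping is the passage from the bounded-domain Schauder estimate of Lemma~\ref{lemshauder} to the supremum bound on $w$ via the Dirichlet data; I expect this to be the main obstacle, though it is routine, the remaining work being the careful tracking of constants so that $\eps_0$ depends only on $c,d,\alpha$.
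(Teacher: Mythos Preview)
Your proposal is correct and follows exactly the route the paper indicates in the sentence preceding the corollary: apply Corollary~\ref{cor:dpd} pathwise, convert the $\|w\|$ term into a bound on $[\tilde\zeta]_{\alpha-2,P_0}$ via Lemma~\ref{lemshauder} together with the Dirichlet boundary data, and then integrate against the Gaussian tail from Lemma~\ref{lem mult}. Your extra bookkeeping (the lower bound $\theta_0=\Theta^{-1}(4/\lambda^2)$ and the separation of the $\|g\|$-dependent prefactor from $\eps_0$) is a sensible fleshing-out of what the paper leaves implicit.
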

Using Theorem~\ref{The Theorem} in the case $f(u)=u|u|^{m-1}$, we have the more optimal estimate as follows:
\begin{cor}\label{SPDE_moment_bound}
Let $u$ solve the SPDE \eqref{phi multnoise} in the sense of \eqref{phi_multnoisebis} where $f(u,z)=u|u|^{m-1}$ and $g$ is bounded. Then there exists $\eps_0=\eps_0(m,d,\alpha)>0$
such that
for $0< \eps \leqslant \eps_0$,
\begin{align*}
\E \bigg[\exp \bigg(\eps \Big(\sup_{0< R \leq \frac12} R^{\frac{2}{m-1}} \| u \|_{P_R} \Big)^{2+(m-1)\alpha} \bigg)  \bigg] < \infty.
\end{align*}

\end{cor}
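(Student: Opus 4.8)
\textbf{Proof proposal for Corollary~\ref{SPDE_moment_bound}.}

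The plan is to combine the pathwise estimate of Theorem~\ref{The Theorem} with the Gaussian integrability of the noise seminorm provided by Lemma~\ref{lem mult}. First, I would invoke Lemma~\ref{lem mult} to obtain a modification $\tilde\zeta$ of the distribution $\zeta$ associated to $\sigma\,dW$, which almost surely lies in $C^{\alpha-2}$ for every $\alpha<\frac{2-\lambda}{2}$, and which satisfies $\Exp{\exp(\eps_1 [\tilde\zeta]_{\alpha-2,P_0}^2)}<\infty$ for some $\eps_1>0$. Fix one such $\alpha\in(0,1)$ (we may also need $\alpha<1$, which is automatic since $\lambda\ge 0$ forces $\frac{2-\lambda}{2}\le 1$; if $\lambda$ could be negative one simply caps $\alpha$ at any value in $(0,1)$). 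Since $\tilde\zeta(\eta)=\zeta(\eta)$ almost surely for every smooth compactly supported $\eta$, the process $u$ solving~\eqref{phi_multnoisebis} also solves~\eqref{phi41bis} with $\zeta$ replaced by $\tilde\zeta$, almost surely. Hence Theorem~\ref{The Theorem} applies pathwise: almost surely, for every $R\in(0,\tfrac12)$,
\begin{equation*}
\|u\|_{P_R}\leqslant C\max\left\{R^{-\frac{2}{m-1}},[\tilde\zeta]_{\alpha-2,P_0}^{\frac{1}{1+(m-1)\frac\alpha2}},\|g\|^{\frac1m}\right\}.
\end{equation*}

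Next I would rearrange this into a statement about the scale-invariant quantity appearing in the corollary. Multiplying by $R^{\frac{2}{m-1}}$ and taking the supremum over $R\in(0,\tfrac12]$, the first term in the maximum contributes a deterministic constant (bounded by $C$), so
\begin{equation*}
\sup_{0<R\leqslant\frac12} R^{\frac{2}{m-1}}\|u\|_{P_R}\leqslant C'\max\left\{1,[\tilde\zeta]_{\alpha-2,P_0}^{\frac{1}{1+(m-1)\frac\alpha2}},\|g\|^{\frac1m}\right\},
\end{equation*}
where I used that $R^{\frac{2}{m-1}}\le C''$ on the range $R\le\frac12$ to absorb the $R$-prefactor on the noise and $g$ terms into the constant. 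Raising to the power $2+(m-1)\alpha$ and noting that $\big(\frac{1}{1+(m-1)\frac\alpha2}\big)\cdot(2+(m-1)\alpha)=2$, the middle term becomes $[\tilde\zeta]_{\alpha-2,P_0}^2$ exactly; the other two terms are deterministic constants. Thus there is a deterministic constant $D=D(m,d,\alpha,\|g\|)$ with
\begin{equation*}
\Big(\sup_{0<R\leqslant\frac12} R^{\frac{2}{m-1}}\|u\|_{P_R}\Big)^{2+(m-1)\alpha}\leqslant D\big(1+[\tilde\zeta]_{\alpha-2,P_0}^2\big)
\end{equation*}
almost surely. Choosing $\eps_0\le \eps_1/D$ and using $\exp(\eps D(1+X))=e^{\eps D}\exp(\eps D X)\le e^{\eps D}\exp(\eps_1 X)$ for $X=[\tilde\zeta]_{\alpha-2,P_0}^2$ and $\eps\le\eps_0$, the claimed expectation is finite by Lemma~\ref{lem mult}.

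The routine points to double-check are the algebraic identity on the exponents (which is what forces the exponent $2+(m-1)\alpha$ to be exactly the ``square-root-of-Gaussian'' scale), and the harmless absorption of $R$-powers and of $\|g\|$ into constants over the bounded range $R\in(0,\tfrac12]$. The only genuine input beyond bookkeeping is that $u$ does solve the equation with the \emph{modification} $\tilde\zeta$ rather than with $\zeta$ itself; this is exactly the content of the last assertion of Lemma~\ref{lem mult}, so no real obstacle remains. (If one wanted the supremum over all $R>0$ rather than $R\le\frac12$ one would note $P_R$ is empty for $R\ge 1$ and handle $R\in[\frac12,1)$ by monotonicity $\|u\|_{P_R}\le\|u\|_{P_{1/2}}$, but as stated the corollary only concerns $R\le\frac12$.)
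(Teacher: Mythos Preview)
Your proposal is correct and matches the paper's approach exactly: the paper does not spell out a proof but simply states that the corollary follows from Theorem~\ref{The Theorem} combined with Lemma~\ref{lem mult}, which is precisely the combination you carry out (including the key exponent identity $\frac{2+(m-1)\alpha}{1+(m-1)\alpha/2}=2$). The only minor quibble is that your constant $D$, and hence $\eps_0$, picks up a dependence on $\|g\|$; this is harmless since $g$ is a fixed bounded function, but strictly speaking the stated dependence $\eps_0=\eps_0(m,d,\alpha)$ should also include $\|g\|$.
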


The proof of Lemma~\ref{lem mult} relies on the following technical lemma.
\begin{lem}\label{lem mult noise}
The supremum $\sup_{0< T\leqslant 1}\|(\zeta)_T\|_{P_0}^{2p}T^{2p(2-\alpha)}$ is bounded by the supremum over dyadic $T$ only,
\begin{equation}\label{Dyadic}
\sup_{0< T\leqslant 1}\|\zeta_T\|_{P_0}^{2p}T^{2p(2-\alpha)}\lesssim \sup_{T=2^{-k}\leqslant 1}\|\zeta_T\|_{P_0+B(0,1)}^{2p}T^{2p(2-\alpha)}.
\end{equation} 
\end{lem}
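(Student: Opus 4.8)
The plan is to reduce the continuous supremum over $T\in(0,1]$ to a supremum over a dyadic sequence by comparing $\zeta_T$ with $\zeta_{T'}$ for $T'$ the nearest dyadic number above $T$, at the cost of enlarging the spatial domain slightly. The key observation is that $\|\zeta_T\|_{P_0}$ cannot fluctuate too wildly as $T$ ranges over a dyadic block $[2^{-k-1},2^{-k}]$, because the mollification kernels $\Psi_T$ and $\Psi_{T'}$ with $T\asymp T'$ differ in a controlled way.

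First I would fix $T\in(0,1]$ and let $k\geq 0$ be such that $2^{-k-1}< T\leq 2^{-k}$, setting $T'=2^{-k}$ so that $T\leq T'\leq 2T$. The goal is an estimate of the form $\|\zeta_T\|_{P_0}\lesssim \|\zeta_{T'}\|_{P_0+B(0,1)}$, which upon multiplying by $T^{2-\alpha}\leq (T')^{2-\alpha}$ and taking suprema gives \eqref{Dyadic} after raising to the power $2p$. To get such an estimate, I would write $\Psi_T = \Psi_{T'}\ast \chi$ for a suitable (signed, but with $L^1$-norm bounded independently of $k$) kernel $\chi$ supported in $B(0,cT')$ — or more robustly, write $\zeta_T = (\zeta_{T'})\ast \rho$ where $\rho=\rho_{T,T'}$ is obtained from the fact that the parametrized family $\{\Psi_T\}$ is smooth in $T$ with the correct parabolic scaling. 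Concretely, since $\Psi_T(z)=T^{-(d+2)}\Psi(z/T^{(1,2)})$ (parabolic scaling), the map $T\mapsto \Psi_T$ is smooth and one can express $\Psi_T$ as a convolution of $\Psi_{T'}$ with a kernel whose $L^1$ norm and support radius are uniformly bounded when $T/T'\in[\tfrac12,1]$; this is the standard semigroup-type comparison used e.g. in \cite{OW} and \cite[Lemma A3]{OW} for equivalence of norms defined by different kernels. Then $\|\zeta_T\|_{P_0} = \|(\zeta_{T'})\ast\rho\|_{P_0}\leq \|\rho\|_{L^1}\,\|\zeta_{T'}\|_{P_0+\mathrm{supp}(\rho)}\lesssim \|\zeta_{T'}\|_{P_0+B(0,1)}$, using $T'\leq 1$ so the support enlargement is contained in $B(0,1)$.

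The main obstacle — and the only nontrivial point — is establishing the comparison kernel bound with constants independent of $k$. One clean way to handle it is to avoid convolution factorisation entirely and instead use the local Hölder-type control: for $z\in P_0$,
\begin{align*}
|\zeta_T(z)-\zeta_{T'}(z)| = \Big| \int \zeta_{T'}(z-\bar z)\,\big(\Psi_{T/T'}\big)_{1}(\cdots) \Big|,
\end{align*}
but this still needs the same estimate. Alternatively one can simply bound, for any $T\leq T'\leq 2T$, $\|\zeta_T\|_{P_0} = \|(\zeta_{T'/2})_{?}\|$ — here the choice of the doubled kernel $\tilde\Psi = \Psi_{1/2}\ast\Psi_{1/2}$ introduced in \eqref{def kernel} is exactly what makes the splitting transparent, since $\tilde\Psi_T = \Psi_{T/2}\ast\Psi_{T/2}$ and one can slide one of the two half-scale factors. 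I would therefore carry out the argument with $\tilde\Psi$, writing $\zeta_T$ (mollified at scale $T$) in terms of $\zeta$ mollified at a comparable dyadic scale via the two-fold structure, so that the comparison kernel is manifestly a rescaled fixed smooth kernel with $L^1$ norm $1$ and support in $B(0,1)$.

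Finally, once the pointwise-in-$T$ bound $\|\zeta_T\|_{P_0}\lesssim\|\zeta_{2^{-k}}\|_{P_0+B(0,1)}$ is in hand for $2^{-k-1}<T\leq 2^{-k}$, I would multiply through by $T^{2-\alpha}$, use $T\leq 2^{-k}$ to replace the prefactor, raise to the power $2p$, and take the supremum over $T$ on the left and over dyadic scales $2^{-k}\leq 1$ on the right, obtaining \eqref{Dyadic}. No probability enters this lemma; it is purely a deterministic statement about the mollification family, and its role is to let the subsequent chaining/Kolmogorov-type argument in the proof of Lemma~\ref{lem mult} work with a countable family of Gaussian random variables $\{\|\zeta_{2^{-k}}\|\}_k$ rather than an uncountable one.
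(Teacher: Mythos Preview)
Your central step --- bounding $\|\zeta_T\|_{P_0}$ by $\|\zeta_{T'}\|_{P_0+B(0,1)}$ for the \emph{single} nearest dyadic $T'$ --- does not hold for a generic compactly supported kernel $\Psi$, and this is where the argument breaks. The factorisation $\Psi_T = \Psi_{T'}\ast\chi$ with $\|\chi\|_{L^1}$ uniformly bounded would require, on the Fourier side, $\hat\chi(\xi)=\hat\Psi(T\xi)/\hat\Psi(T'\xi)$; since $\Psi$ is compactly supported, $\hat\Psi$ is an entire function with (generically) infinitely many real zeros, and for $T\neq T'$ these do not line up, so $\hat\chi$ acquires poles and $\chi\notin L^1$. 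Concretely, if $\hat\Psi(\xi_0)=0$ then $\zeta_{T'}$ annihilates the frequency $\xi_0/T'$ while $\zeta_T$ in general does not, so no inequality $\|\zeta_T\|\lesssim\|\zeta_{T'}\|$ can hold uniformly in $\zeta$. The doubled kernel $\tilde\Psi=\Psi_{1/2}\ast\Psi_{1/2}$ does not rescue this: $\hat{\tilde\Psi}=(\hat\Psi_{1/2})^2$ has the same zeros, and in the paper the doubling is used only in the \emph{application} of the lemma (the Young inequality step in the proof of Lemma~\ref{lem mult}), not in its proof. The reference \cite[Lemma~A.3]{OW} you invoke is in fact proved by the multi-scale construction below, not by a one-scale comparison.

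What the paper does is a genuinely multi-scale decomposition. Via a moment-matching linear system it builds an auxiliary kernel $\Psi'=\omega^0\ast\Psi$ with vanishing moments up to some order $\beta$, and then telescopes
\[
\Psi_{1+\lambda}=\sum_{k\geq 0}\Psi'_{\theta^k}\ast\omega^{(k,\lambda)}
=\sum_{k\geq 0}\Psi_{\theta^k}\ast\omega^0\ast\omega^{(k,\lambda)},
\qquad \int|\omega^{(k,\lambda)}|\lesssim\theta^{(\beta+1)k}.
\]
After parabolic rescaling by $2^{-n}$ this expresses $\zeta_{2^{-n}(1+\lambda)}$ as a sum involving $\zeta$ mollified at \emph{all} smaller scales $\theta^k 2^{-n}$, and the geometric factor $\theta^{(\beta+1)k}$ beats the blow-up $(\theta^k 2^{-n})^{\alpha-2}$ once $\beta+1>2-\alpha$. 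The presence of the supremum over all dyadic scales on the right of \eqref{Dyadic} is thus essential: it cannot be reduced to a single neighbouring scale.
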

We give the proof of this in Appendix~\ref{appendix 1}.
\begin{proof}[Proof of Lemma~\ref{lem mult}] 
This Lemma is a variant of \cite[Lemma 9]{mourrat2017global} and we refer the reader to this Lemma for the construction of 
a suitable modification of $\zeta$. Here we only show the exponential integrability bound \eqref{multnoise bound}, using a similar 
argument as in \cite[Lemma 4.1]{OW}.
Throughout this proof, $\lesssim$ denotes a bound up to a constant that depend only on the dimension.

In the expansion in series of the exponential, we can exchange expectation and sum:
\[\E\Big[\exp\Big(\epsilon^2\sup_{0<T\leqslant 1}\|(\zeta_\frac{T}2)_\frac{T}2\|_{P_0}^2T^{2(2-\alpha)}\Big)\Big]=\sum_{p=0}^\infty\epsilon^{2p}\frac{\Exp{\sup_{0\leqslant T\leqslant 1}\|(\zeta_\frac{T}2)_\frac{T}2\|_{P_0}^{2p}T^{2p(2-\alpha)}}}{p!}.\]
Applying Lemma~\ref{lem mult noise}, we can bound the supremum over all $T$ by the sum over dyadic $T$.
\[\Exp{\sup_{0<T\leqslant 1}\|(\zeta_\frac{T}2)_\frac{T}2\|_{P_0}^{2p}T^{2p(2-\alpha)}}\leqslant\sum_{T=2^{-k}\leqslant \frac12}\Exp{\|(\zeta_T)_T\|_{P_0+B(0,1)}^{2p}}T^{2p(2-\alpha)}.\]
Young's inequality implies 
\[
\|(\zeta_T)_T\|_{P_0+B(0,1)}\leqslant\|\zeta_T\|_{L^q,P_0+B(0,2)}\|\Psi_T\|_{L^{q'}},
\]
where the subscript means that the $L^q$ norm of $\zeta_T$ is taken over $P_0+B(0,2)$ and
where $q'=\frac{q-1}{q}$. By scaling, $\|\Psi_T\|_{L^{q'}}\lesssim T^{-\frac{d+2}{q}}$. We apply this with $q=2p$.
\begin{align*}
\Exp{\|(\zeta_T)_T\|^{2p}}&\lesssim\Exp{\|\zeta_T\|_{L^{2p},P_0+B(0,2)}^{2p}}T^{-(d+2)}\\
&=\Exp{\int_{P_0+B(0,2)}\zeta_T(t,x)^{2p}dtdx}T^{-(d+2)}\\
&\lesssim \sup_{z\in P_0+B(0,2)}\Exp{\zeta_T(z)^{2p}}T^{-(d+2)}.
\end{align*}
We bound $\Exp{\zeta_T(z)^{2p}}$ using the boundedness of $\sigma$. Without loss of generality, we show the computation for $z=(0,0)$.
 By the Burkholder-Davies-Gundy inequality, 
\begin{align*}
\Exp{\zeta_T(0,0)^{2p}}&=\Exp{\Big(\int_{(0,1)}\int_{\R^d}\Psi_T(t,x)\sigma(t,x)dW(t,x)\Big)^{2p}}\\
&\lesssim p^p\Big{(}\int\int\int\Psi_T(t,x)\Psi_T(t,x') \sigma(t,x)\sigma(t,x')K(x-x')dtdxdx'\Big{)}^p\\
&+\ind{d=1,\lambda>1}p^p\Big{(}\int\int \Psi_T(t,x)^2 \sigma(t,x)^2dtdx\Big{)}^p\\
&\lesssim p^p ( T^{-\lambda -2}+\ind{d=1,\lambda>1}T^{-d-2})^p\lesssim p^pT^{-p(\lambda +2)}.
\end{align*}
We get that $\Exp{\|(\zeta)_T\|_{}^{2p}}T^{2p(2-\alpha)}\lesssim p^{p}T^{p(2-2\alpha-\lambda)-(d+2)}$. Since $2-2\alpha-\lambda>0$, for $p$ large enough,
\[
\sum_{T=2^{-k}\leqslant \frac12}\Exp{\|(\zeta_T)_T\|^{2p}}T^{2p(2-\alpha)}\lesssim p^{p}\frac{1}{1-2^{-p(2-2\alpha-\lambda)+(d+2)}}.
\]
By Stirling's formula, for $p$ large enough,
\[\epsilon^{2p}\frac{\Exp{\sup_{0\leqslant T\leqslant 1}\|(\zeta_\frac{T}2)_\frac{T}2\|_{}^{2p}T^{2p(2-\alpha)}}}{p!}\lesssim \epsilon^{2p}e^p\sqrt{p},\]
hence for $\epsilon<e^{-2}$, \eqref{multnoise bound} is verified.
\end{proof}
\section{Invariant measure and Optimality}\label{sec:Optimality}
In this last section, we consider a special case of the SPDE considered in Section~\ref{sec:multipliative}, namely the 
case of a one-dimensional reaction-diffusion equation driven by an additive space-time white noise. We aim to argue that 
in this case the bound obtained in Corollary~\ref{SPDE_moment_bound} is optimal in terms of stochastic integrability.

Let $d=1$ and let $W$ be as in Section~\ref{sec:multipliative} with covariance operator $K\eta(x) = \eta(x)$. It is well-known 
\cite[Section 11.2]{da1996ergodicity} that if we impose Dirichlet boundary conditions on the space-interval $[-1,1]$, then 
\eqref{phi_multnoisebis} defines a reversible Markov process with respect to the measure
%
%
%
%
%
\begin{equation}\label{Invariant measure}
\inv{Z}\exp\left(-\int_{-1}^1\inv{m+1} |u(x)|^{m+1}dx\right)\mu(du),
\end{equation}
where $\mu$ is the law of an appropriately scaled Brownian bridge and $Z$ is a normalisation constant. 
From the explicit expression \eqref{Invariant measure} one can immediately read of that under this measure the following expectations are finite
for $\alpha<\frac12$ and $\epsilon$ small enough
\begin{equation}\label{bounded expectation}
\Exp{\exp \Big(\epsilon\int_{-1}^1 |u|^{m+1} dx \Big)}<\infty\text{ and }
\Exp{\exp\left(\epsilon [u]_{\alpha}^2\right)}<\infty.
\end{equation}
The following proposition shows how to interpolate these two estimates to get optimal stochastic integrability for 
 the supremum norm $\|u\|$. 

\begin{prop}\label{prop interpolation}
 If $u\in C^\alpha(-1,1)$ and $u^{m+1}$ is integrable, $u$ is bounded and we have the following interpolation:
\begin{equation}\label{alpha p interpolation}
\Big(\frac{\|u\|_{(-1,1)}}{2}\Big)^{1+\alpha(m+1)}\leqslant \max\{[u]_{\alpha}\|u\|_{m+1}^{\alpha(m+1)},\|u\|_{m+1}^{1+\alpha(m+1)}\},
\end{equation}
where $\|.\|_{m+1}$ refers to the $L^{m+1}$ norm on $[-1,1]$.
\end{prop}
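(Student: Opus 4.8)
The plan is to run the classical interpolation argument: at a point where $|u|$ is nearly maximal, $\alpha$-H\"older continuity keeps $|u|$ large on a whole sub-interval whose length is quantified by $[u]_\alpha$, and then the $L^{m+1}$ mass of $u$ on that sub-interval forces it to be small.

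First I would record that an $\alpha$-H\"older function on $(-1,1)$ has oscillation at most $2^\alpha[u]_\alpha$, hence is bounded; set $M := \|u\|_{(-1,1)}$ and assume $M>0$, the case $M=0$ being trivial. By uniform continuity $u$ extends continuously to $[-1,1]$, so we may fix $x_0\in[-1,1]$ with $|u(x_0)|=M$. Next I choose the radius $\delta$ by $[u]_\alpha\,\delta^\alpha = M/2$, i.e.\ $\delta = \bigl(M/(2[u]_\alpha)\bigr)^{1/\alpha}$ (interpreted as $+\infty$ when $[u]_\alpha=0$); then $|u(x)|\geqslant M - [u]_\alpha|x-x_0|^\alpha \geqslant M/2$ for every $x$ with $|x-x_0|\leqslant\delta$. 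A short case analysis on the location of $x_0$ in $[-1,1]$ shows that $I := [x_0-\delta,x_0+\delta]\cap[-1,1]$ has length $|I|\geqslant\min(\delta,2)$, whence
\[
\|u\|_{m+1}^{m+1} \;\geqslant\; \int_I |u|^{m+1} \;\geqslant\; \Bigl(\frac{M}{2}\Bigr)^{m+1}\min(\delta,2).
\]

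Finally I would split into two regimes. If $\delta\leqslant 2$, substituting the value of $\delta$ and raising to the power $\alpha$ gives $\|u\|_{m+1}^{\alpha(m+1)} \geqslant (M/2)^{\alpha(m+1)}\cdot M/(2[u]_\alpha)$, which rearranges to $[u]_\alpha\|u\|_{m+1}^{\alpha(m+1)} \geqslant (M/2)^{1+\alpha(m+1)}$, the first term in the maximum. If $\delta>2$, then $\|u\|_{m+1}^{m+1}\geqslant 2(M/2)^{m+1}\geqslant(M/2)^{m+1}$, so $\|u\|_{m+1}\geqslant M/2$ and hence $\|u\|_{m+1}^{1+\alpha(m+1)}\geqslant(M/2)^{1+\alpha(m+1)}$, the second term. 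Combining the two cases yields \eqref{alpha p interpolation}. The only mildly delicate point is the lower bound $|I|\geqslant\min(\delta,2)$ when $x_0$ lies near an endpoint of $[-1,1]$; everything else is bookkeeping with exponents, so I do not expect a genuine obstacle.
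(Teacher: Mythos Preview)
Your proof is correct. The bound $|I|\geqslant\min(\delta,2)$ is easily checked by the case analysis you indicate, and the exponent bookkeeping in each regime is fine.

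The paper's argument is a dual version of yours. Instead of fixing the sub-interval length $\delta$ so that $|u|$ drops by exactly $M/2$ and then reading off a lower bound on $\|u\|_{m+1}$, the paper keeps the interval length $x\in(0,2]$ free and derives a pointwise upper bound $|u(t)|\leqslant x^{-1/(m+1)}\|u\|_{m+1}+x^{\alpha}[u]_{\alpha}$ (via comparison of $u(t)$ with the average of $u$ on the interval and Jensen's inequality), then optimises over $x$. Your route is slightly more elementary in that it avoids Jensen entirely and works purely with the lower bound $|u|\geqslant M/2$ on a quantified sub-interval; the paper's formulation has the minor advantage of producing an inequality valid for every window length before any optimisation. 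Both lead to the same conclusion with the same constants.
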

Since $2\alpha<1$, 
\[\| u\|_{m+1}^{(m+1)\alpha}[u]_{\alpha}\leqslant\| u\|_{m+1}^{m+1}+[u]_{\alpha}^2.\]
Hence, \eqref{bounded expectation} implies that for $\epsilon$ small, 
\begin{equation}\label{optim1}
\Exp{\exp\left(\epsilon\|u\|^{1+(m+1)\alpha}\right)}<\infty.
\end{equation}
On the other hand, from Theorem~\ref{The Theorem} and from Corollary~\ref{SPDE_moment_bound}, we get
\begin{equation}\label{optim2}
\E \Big[\exp \Big( \eps(2^{-\frac{2}{m-1}} \| u \|_{P_\frac12})^{2+(m-1)\alpha} \Big)  \Big] <\infty.
\end{equation}
Therefore, for $\alpha\rightarrow \inv{2}$, the exponents in \eqref{optim1} and \eqref{optim2} both converge to $\frac{m+3}{2}$. For one dimensional space-time white noise, the term $[\zeta]_{\alpha-2,P_0}^\inv{1+(m-1)\frac\alpha2}$ in Theorem~\ref{The Theorem} is an optimal control.

\appendix
\section{Proof of lemma~\ref{lem mult noise}}\label{appendix 1}
This proof essentially follows \cite[Lemma A.3]{OW}.
 By splitting the interval $(0,1)$ into $[2^{-n},2^{-n+1})$ for $n\geqslant 1$, it is enough to prove that for all $n\geqslant1$, uniformly in $\lambda\in(0,1)$

\begin{equation}
\|(\zeta)_{2^{-n}(1+\lambda)}\|\lesssim 2^{-n(\alpha-2)}\sup_{T=2^{-k}\leqslant1}T^{2-\alpha}\|\zeta_T\|.
\end{equation}
For $m\in\mathbb{N}^{d+1}$, denote $|m|=2m_0+\sum_{k=1}^dm_k$ the parabolic index, and $m!=\prod_{k=0}^d m_k!$.
We define $A_{n,m}=\int z^n\partial^m\Psi(z)dz$ and observe that since $\Psi $ is compactly supported and 
since $\int\Psi=1$, we have
\begin{align*}
A_{n,m} 
=&\left\{\begin{array}{cc}
0 & \text{if } |n|\leqslant |m|, n\neq m, \\ 
(-1)^{|m|}m! & \text{if } n=m.
\end{array} \right.
\end{align*}
Hence for any $\beta>0$, $(A_{n,m})_{|n|,|m|\leqslant \beta}$ is an invertible linear system. By continuity of the coefficients, for $r$ small enough (depending on $\beta$),
\begin{equation}
A_{n,m}^r=\int z^n\partial^m(\Psi_r\ast\Psi)(z)dz
\end{equation}
is also an invertible linear system. Hence, there exists coefficients $(a_m)_{|m|\leqslant \beta}$ such that 
\begin{equation}
\sum_{|m|\leqslant \beta}a_m\int z^n\partial^m(\Psi_r\ast\Psi)(z)dz=\left\{\begin{array}{cc}
1 & \text{if } n=0 \\ 
0 & \text{else. } 
\end{array} \right.
\end{equation}
Set $\omega^0=\sum_{|m|\leqslant \beta}a_m\partial^m\Psi_r$ and $\Psi'=\omega^0\ast \Psi$, then 
\begin{equation}
\int z^n\Psi'(z)dz=\left\{\begin{array}{cc}
1 & \text{if } n=0 \\ 
0 & \text{for } 0<|n|<\beta.
\end{array} \right.
\end{equation}
Define now for $\theta>0$ inductively $\omega^{(0,\lambda)}=\Psi_{1+\lambda}$ and $\omega^{(k+1,\lambda)}=\omega^{(k,\lambda)}-\Psi'_{\theta^k}\ast\omega^{(k,\lambda)}$, in order to get 
\begin{equation}\label{dyadic decomposition}
\Psi_{1+\lambda}=\sum_{k=0}^\infty\Psi'_{\theta^k}\ast\omega^{(k,\lambda)}=\sum_{k=0}^\infty\Psi_{\theta^k}\ast\omega^{(k,\lambda)}\ast\omega^0.
\end{equation}
Similar to \eqref{mollified regularity} on can see that $\int|\omega^{(k,\lambda)}-\Psi'_{\theta^k}\ast\omega^{(k,\lambda)}|\lesssim\theta^{(\beta+1) k}\int|\partial^\beta\omega{(k,\lambda)}|$ and $\int|\partial^\beta(\omega^{(k,\lambda)}-\Psi'_{\theta^k}\ast\omega^{(k,\lambda)})|\lesssim\int|\partial^\beta\omega^{(k,\lambda)}|$, hence by induction 
\begin{equation}
\int|\omega^{(k,\lambda)}|\lesssim \theta^{(\beta+1) k}\int|\partial^\beta\Psi_{1+\lambda}|\lesssim \theta^{(\beta+1) k}\sup_{\lambda\in(0,1)}\int|\partial^\beta\Psi_{1+\lambda}|.
\end{equation}
We can rescale \eqref{dyadic decomposition} by $T=2^{-k}$, and we have
\begin{align*}
\|(\zeta)_{2^{-n}(1+\lambda)}\|_{P_0}&=\|\sum_{k=0}^\infty\Psi_{\theta^k{2^{-n}}}\ast \omega^{(k,\lambda)}_{2^{-n}}\ast\omega^0_{2^{-n}}\ast \zeta\|_{P_0} \\
&\leqslant\sum_{k=0}^\infty\|(\zeta)_{\theta^k{2^{-n}}}\|_{P_0 + B(0,2^{-n})}\int|\omega^{(k,\lambda)}_{2^{-n}}|\int|\omega^{0}_{2^{-n}}|\\
&\leqslant\sup_{T=2^{-k}\leqslant1}T^{2-\alpha}\|\zeta_T\|_{P_0 + B(0,1)} \sup_{\lambda\in(0,1)}\int|\partial^\beta\Psi_{1+\lambda}|\int|\omega^{0}|\\
&\quad\quad\quad\sum_{k=0}^\infty(\theta^k{2^{-n}})^{\alpha-2}\theta^{(\beta+1) k}\\
&\lesssim2^{-n(\alpha-2)}\sup_{T=2^{-k}\leqslant1}T^{2-\alpha}\|\zeta_T\|_{P_0 + B(0,1)}.
\end{align*}

\section{Proof of proposition~\ref{prop interpolation}}.

For any interval $I\subset [-1,1]$, 
\[|u(t)|-\inv{|I|}\Big|\int_Iu(s)ds\Big|\leqslant\inv{|I|}\int_I|u(t)-u(s)|ds\leqslant\inv{|I|}\int_I[u]_{\alpha,I}|t-s|^\alpha.\]
If $t\in I$, $|t-s|^\alpha\leqslant I^\alpha$. We can apply Jensen's inequality. 
\[|u(t)|\leqslant \left(\inv{|I|}\int_I|u(s)|^{m+1}ds\right)^\frac1{m+1}+|I|^\alpha[u]_{\alpha,I}.\]
And since this is true for any $I\subset [-1,1]$, we have for any choice of $0<x\leqslant 2$,
\[\|u\|_{J}\leqslant x^{-\frac1{m+1}}\|u\|_{m+1}+x^\alpha[u]_{\alpha,J}.\]
 If $\|u\|_{m+1}\geqslant [u]_{\alpha}$ then choose $x=1$ to get $\|u\|_{(-1,1)}\leqslant 2\|u\|_{m+1}$. Else choose $x=(\|u\|_{m+1}/[u]_{\alpha})^\frac{m+1}{1+\alpha(m+1)}\leqslant 1$ and get $\|u\|_{(-1,1)}\leqslant 2[u]_{\alpha}^\frac1{1+\alpha(m+1)}\|u\|_{m+1}^\frac{\alpha(m+1)}{1+\alpha(m+1)}$. In conclusion, 
\[\|u\|_{(-1,1)}\leqslant 2\max\{\|u\|_{m+1},[u]_{\alpha}^\frac1{1+\alpha(m+1)}\|u\|_{m+1}^\frac{\alpha(m+1)}{1+\alpha(m+1)}\}.\]
\bibliographystyle{abbrv}
\bibliography{phip}

%
%
%
%
%
%
%
%
%
%
%
%

\end{document}